\newcommand{\CC}{{\mathbb{C}}}
\newcommand{\FF}{{\mathbb{F}}}
\newcommand{\RR}{{\mathbb{R}}}
\newcommand{\bB} {\mathbf B}
\newcommand{\bC} {\mathbf C}
\newcommand{\bE} {\mathbf E}
\newcommand{\bF} {\mathbf F}
\newcommand{\bG} {\mathbf G}
\newcommand{\bO}{{\mathbf O}}
\newcommand{\bN} {\mathbf N}
\newcommand{\bT} {\mathbf T}
\newcommand{\bU} {\mathbf U}
\newcommand{\bZ} {\mathbf Z}
\newcommand{\cE} {\mathcal E}
\newcommand{\fA} {\mathfrak A}
\newcommand{\fP} {\mathfrak{P}}
\newcommand{\fS} {\mathfrak S}
\newcommand{\Aut}{{\operatorname{Aut}}}
\newcommand{\cd}{\operatorname{cd}}
\newcommand{\diag}{{\operatorname{diag}}}
\newcommand{\Irr}{{\operatorname{Irr}}}
\newcommand{\mh}{{{\operatorname{mh}}}}
\newcommand{\Out}{{\operatorname{Out}}}
\newcommand{\Stab}{{\operatorname{Stab}}}
\newcommand{\Syl}{{\operatorname{Syl}}}
\newcommand{\GL}{\operatorname{GL}}
\newcommand{\AGL}{\operatorname{AGL}}
\newcommand{\GaL}{\operatorname{\Gamma L}}
\newcommand{\AGaL}{\operatorname{A\Gamma L}}
\newcommand{\SL}{\operatorname{SL}}
\newcommand{\PGL}{\operatorname{PGL}}
\newcommand{\PSL}{\operatorname{PSL}}
\newcommand{\PSU}{\operatorname{PSU}}
\newcommand{\Sp}{{{\operatorname{Sp}}}}
\newcommand{\SO}{{{\operatorname{SO}}}}
\newcommand{\tw}[1]{{}^{#1}\!}
\newcommand{\wal}{{\widehat{\alpha}}}
\let\al=\alpha
\let\be=\beta
\let\eps=\epsilon
\let\ga=\gamma
\let\la=\lambda
\let\Om=\Omega
\let\om=\omega
\let\vhi=\varphi
\newtheorem{thm}{Theorem}[section]
\newtheorem{lem}[thm]{Lemma}
\newtheorem{cor}[thm]{Corollary}
\newtheorem{prop}[thm]{Proposition}
\newtheorem{thmA}{Theorem}
\theoremstyle{definition}
\numberwithin{equation}{section}
\begin{document}

\title[Minimal heights and defect groups with two character degrees]{Minimal heights and defect groups\\ with two character degrees}

\author{Gunter Malle, Alexander Moret\'o and Noelia Rizo}
\address{FB Mathematik, TU Kaiserslautern, Postfach 3049,
  67653 Kaisers\-lautern, Germany.}
\email{malle@mathematik.uni-kl.de}

\address{Departament de Matem\`atiques, Universitat de Val\`encia, 46100
 Burjassot, Val\`encia, Spain}
\email{alexander.moreto@uv.es}
\email{Noelia.Rizo@uv.es}

\begin{abstract}
Conjecture A of \cite{EM14} predicts the equality between the smallest positive
height of the irreducible characters in a $p$-block of a finite group and the
smallest positive height of the irreducible characters in its defect group. 
Hence, it can be seen as a generalization of Brauer's famous height zero
conjecture. One inequality was shown to be a consequence of Dade's Projective
Conjecture. We prove the other, less well understood, inequality for principal
blocks when the defect group has two character degrees. 
\end{abstract}

\thanks{
The first author gratefully acknowledges financial support by SFB TRR
195 -- Project-ID 286237555.
The second and third authors are supported by Ministerio de Ciencia e
Innovaci\'on (Grants PID2019-103854GB-I00 and PID2022-137612NB-I00 funded by
MCIN/AEI/ 10.13039/501100011033 and ERDF A way of making Europe) and
Generalitat Valenciana Grants (CIAICO/2021/163 and CDEIGENT grant
CIDEIG/2022/29).} 

\keywords{Blocks, defects, character heights}

\subjclass[2010]{Primary 20C15, 20C20, 20C33}

\date{\today}

\maketitle


\section{Introduction}

One of the main problems of the past decades in representation theory of finite
groups is Brauer's Height Zero Conjecture. Posed by Richard Brauer in 1955, it
asserts that all irreducible characters in a Brauer $p$-block $B$ of a finite
group $G$ have height zero if and only if the defect groups of $B$ are
abelian. The proof of this conjecture has recently been completed in
\cite{MNST} (building among others on earlier work in \cite{NT13, KM, Ru}).
Now it seems appropriate to study possible extensions to blocks with
non-abelian defect groups.

One such possibility was proposed in Conjecture A of \cite{EM14}, where it was
conjectured that if $p$ is a prime, $B$ is a Brauer $p$-block with non-abelian
defect group $D$ and $\Irr(B)$ is the set of irreducible characters in $B$,
then the smallest positive height of the non-linear irreducible characters in
$D$, i.e.,
$$\mh(D)=\min\{\log_p \vhi(1)\mid\vhi\in\Irr(D), \vhi(1)>1\},$$
coincides with the smallest positive height of the non-linear characters in
$\Irr(B)$, which we denote $\mh(B)$. In the following, we will refer to this
conjecture as the EM-conjecture. If we adopt the convention that $\mh(B)$ or
$\mh(D)$ are infinity if no such characters of positive height exist, the
EM-conjecture  extends Brauer's Height Zero Conjecture. The next natural case
to consider is when the defect group has two character degrees. 

In \cite{EM14} it was proved that the inequality $\mh(D)\leq\mh(B)$ follows from
Dade's Projective Conjecture, which has been reduced to a problem on simple
groups. There is still less evidence for the inequality $\mh(D)\geq\mh(B)$. We
refer the reader to \cite{EM14,BM15,flz,sam} for some cases where the conjecture
is known to hold and to \cite[Conj.~2.9]{klls} for a reformulation in terms of
fusion systems.

Our investigations give strong evidence for the validity of the conjecture when
$D$ has two character degrees. As usual, given a finite group $G$ and a fixed
prime~$p$, we write $B_0(G)$ to denote the principal $p$-block of $G$. Our main
result is the following.

\begin{thmA}
 Let $p$ be a prime and let $G$ be a finite group. If a Sylow $p$-subgroup $P$
 of $G$ has two character degrees, then $\mh(B_0(G))\leq \mh(P)$. 
\end{thmA}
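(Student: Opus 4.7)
If $P$ is abelian then, by Brauer's Height Zero Conjecture (proved in \cite{MNST}), every character of $B_0(G)$ has height zero, so $\mh(B_0(G))=\mh(P)=\infty$ and the inequality is trivial. Assume from now on that $P$ is non-abelian with $\mh(P)=a\ge 1$, and let us argue by induction on $|G|$ to exhibit some $\chi\in\Irr(B_0(G))$ with $\chi(1)_p=p^b$ for some $1\le b\le a$.

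The first move is a pair of standard reductions. By Fong's theorem there is a height-preserving bijection between $\Irr(B_0(G))$ and $\Irr(B_0(G/O_{p'}(G)))$ (both have defect group isomorphic to $P$), so we may assume $O_{p'}(G)=1$. Next, for any non-trivial normal $p$-subgroup $N\trianglelefteq G$ the characters of $B_0(G/N)$ inflate to characters of $B_0(G)$ of the same degree; thus if $P/N$ still has two character degrees, the inductive hypothesis applied to $G/N$ concludes. This forces $P'\le N$ in a minimal counterexample, so any non-trivial normal $p$-subgroup of $G$ contains $P'$.

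Now analyze a minimal normal subgroup $N$ of $G$. Case A: $N$ is an elementary abelian $p$-group; then $N\le O_p(G)$, and combining the constraint from the previous step with the restrictive structure of $P$ imposed by $\cd(P)=\{1,p^a\}$ should allow us to construct the required $\chi$ via Clifford theory, possibly after invoking the inductive hypothesis on a proper section or on an inertia subgroup. Case B: $N=S_1\times\cdots\times S_k$ is a direct product of isomorphic non-abelian simple groups of order divisible by $p$; a Sylow $p$-subgroup of $N$ embeds into $P$ and therefore inherits the property of having at most two character degrees, so the same holds for each $S_i$. By the classification of finite simple groups this severely restricts the admissible pairs $(S_i,p)$ to a short list (small-rank groups of Lie type, and a handful of alternating or sporadic exceptions).

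It then remains, for every simple group $S$ on this list, to produce an explicit character in $\Irr(B_0(G_0))$ of positive height $\le a$ for $S\le G_0\le\Aut(S)$, and to transfer it to $G$ via a standard block-covering argument. For Lie-type $S$ in defining characteristic this character can typically be taken from a Steinberg-type construction, and in non-defining characteristic from a unipotent character in the principal $d$-Harish-Chandra series; alternating and sporadic cases can be handled by direct inspection of the known character tables and block distributions. I expect this last step, the case-by-case verification over the classification of finite simple groups, to be the main technical obstacle: even after the list is narrowed, one must locate a character in the principal block with the exact $p$-part required, which is delicate precisely when $a$ is small relative to the Sylow order.
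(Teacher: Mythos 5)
There are genuine gaps, and the most serious one is in your Case B. Your reduction of the non-abelian socle case rests on the claim that a Sylow $p$-subgroup of $N=S_1\times\cdots\times S_k$, being a subgroup of $P$, ``inherits the property of having at most two character degrees.'' That is false: a subgroup $Q\le P$ only satisfies $b(Q)\le p^a$ (any $\theta\in\Irr(Q)$ lies under some $\chi\in\Irr(P)$), not $|\cd(Q)|\le 2$. Moreover, even the corrected constraint gives no ``short list'': the simple groups whose Sylow $p$-subgroups are abelian (hence have one character degree) already form infinitely many families, and it is exactly this situation --- abelian or unrestricted Sylow subgroups in the factors, with the non-abelianness of $P$ coming from the permutation action of $G$ on $\{S_1,\dots,S_k\}$ --- that carries the real difficulty. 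The paper does not classify the possible $S_i$ at all. Instead it proves an \emph{unconditional} statement for almost simple groups (Theorem~\ref{thm:almost} and Corollary~\ref{cor:as}, requiring only non-abelian Sylow subgroups, via Deligne--Lusztig theory), and for $k>1$ it must control $|G:G_\theta|_p\le p^a$ for a suitable $\theta\in\Irr(B_0(N))$; this is done by producing regular orbits of $P$ on the power set, respectively on $\fP_3$, of the set of factors (Gluck/Dolfi, Lemma~\ref{dol}), by the Giudici--Liebeck--Praeger--Saxl--Tiep classification of $p$-concealed primitive permutation groups (Theorem~\ref{t2}, Lemma~\ref{pcon}), by the existence of an $\Aut(S)$-Sylow-invariant $p'$-character in $B_0(S)$ (Lemma~\ref{simples}), and by Murai's lemma plus block covering (Lemmas~\ref{lem:B0} and~\ref{quo}). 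None of this, nor any substitute for it, appears in your ``standard block-covering argument.''

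Your Case A is also essentially empty: saying that the constraints ``should allow us to construct the required $\chi$ via Clifford theory'' skips what is in fact the bulk of the paper (Steps 3--11 of Theorem~\ref{thm:main}). There one first shows (Step 6) that every linear character of the minimal normal $p$-subgroup $N$ extends to a Sylow $p$-subgroup, so all orbits on $\Irr(N)$ have $p'$-size, i.e.\ the action of $G/\bC_G(N)$ is $p$-exceptional; one then needs the full classification of irreducible $p$-exceptional linear groups (Theorems~\ref{t1} and~\ref{t3}), Hering's theorem on groups transitive on $V\setminus\{0\}$, Walter's classification of simple groups with abelian Sylow $2$-subgroups and Schur multiplier data to kill the components (Step 4), the Espuelas and Isaacs orbit theorems for the solvable case (Theorem~\ref{sol}), and the Guralnick--Robinson regular orbit theorem via \cite[Thm~3.1]{mn05} for the $p$-solvable case, besides special arguments for $\SL_2(p^n)$-type sections (Step 7). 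Finally, note one useful point your outline misses: in the paper the induction is applied to $G/N$ for \emph{every} non-trivial normal subgroup $N$, not only normal $p$-subgroups, which yields the much stronger structural fact that all proper quotients of $G$ have abelian Sylow $p$-subgroups; this fact, combined with \cite[Thm~2.1]{NT13}, drives most of the later steps, whereas your weaker conclusion $P'\le N$ for normal $p$-subgroups is not enough to get the argument off the ground.
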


In other words, we prove the less well understood inequality of the
EM-conjecture for principal $p$-blocks of groups whose Sylow $p$-subgroups have
two character degrees (the case with just one degree being Brauer's Height Zero
Conjecture). Our proof depends on the Classification of Finite Simple Groups
and the Deligne--Lusztig theory of characters of finite reductive groups.
Another fundamental tool that we use is the work in \cite{GLPST}. It may be
worth remarking that the recent solution \cite{MNST} of Brauer's Height Zero
Conjecture (BHZ) was preceded by the 
easier proof in the case of principal blocks in \cite{mn}. While the proof of
the general case of BHZ also depends heavily on \cite{GLPST} by means of
\cite{NT13}, the proof of BHZ for principal blocks does not. We think that
extending our main result to arbitrary blocks will be a very challenging
problem.
\medskip

Our paper is structured as follows: in Section~\ref{sec:linear} we collect some
results on linear and permutation groups that are needed in the proof of
Theorem 1. In Section~\ref{sec:simple} we prove some results
on simple and almost simple groups, in particular we show a stronger version of
Theorem 1 for almost simple groups that may be useful elsewhere (see
Theorem~\ref{thm:almost}). In Section~\ref{sec:main} we prove Theorem~1.
\medskip

\noindent{\bf Acknowledgements:}
We thank the referees for their careful reading and helpful suggestions, and
Mandi Schaeffer Fry for making us aware of an omission in one of our proofs.

\section{Linear and permutation groups}   \label{sec:linear}

Let $p$ be a prime. In the following, $G$ will always be a finite group.
Following \cite{GLPST}, we say that a linear group $G\leq\GL_n(p)$ is
\emph{$p$-exceptional} if $p$ divides $|G|$ and all $G$-orbits on the natural
module have $p'$-size, that is, their length is not divisible by $p$.
The classification of solvable
irreducible $p$-exceptional groups was already an impressive result (see
Sections~9 and~10 of \cite{MW}), which lies at the heart of the proof of the
$p$-solvable case of BHZ by D.~Gluck and T.~R.~Wolf \cite{gw}. It took more
than $30$ years and a team of experts \cite{GLPST} to extend this result to
arbitrary groups.  Since it is fundamental for our work, we recall it here.

\begin{thm}   \label{t1}
 Let $G$ be an irreducible $p$-exceptional subgroup of $\GL_n(p)=\GL(V)$
 and suppose that $G$ acts primitively on $V$. Then one of the following holds:
 \begin{enumerate}[\rm(1)]
  \item $G$ is transitive on $V\setminus\{0\}$;
  \item $G\leq\GaL_1(p^n)$;
  \item $G$ is one of the following:
   \begin{enumerate}[\rm(i)]
    \item $G=\fA_c, \fS_c$ where $c=2^r-2$ or $2^r-1$, with $V$ the deleted
     permutation module over $\FF_2$, of dimension $c-2$ or $c-1$
     respectively;
    \item $\SL_2(5)\trianglelefteq G< \GaL_2(9)<\GL_4(3)$,
     orbit sizes $1,40,40$;
    \item $\PSL_2(11)\trianglelefteq G<\GL_5(3)$, orbit sizes $1,22,110,110$;
    \item $M_{11}\trianglelefteq G<\GL_5(3)$, orbit sizes $1,22,220$;
    \item $M_{23}=G<\GL_{11}(2)$, orbit sizes $1,23,253,1771$.
   \end{enumerate}
 \end{enumerate}
\end{thm}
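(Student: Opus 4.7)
The plan is to analyze the primitive action of $G$ on $V=\FF_p^n$ via the Aschbacher subgroup structure theorem for $\GL(V)$, combined with the strong numerical constraints imposed by $p$-exceptionality. First I would record the basic arithmetic: the $G$-orbits partition $V\setminus\{0\}$ into parts $\ell_1,\ldots,\ell_s$ with $\sum \ell_i=p^n-1$, each $\ell_i$ coprime to $p$ and dividing $|G|$. In particular every non-trivial $p$-element of $G$ acts without non-zero fixed points on $V$; this strong regularity condition is what makes the classification tractable.

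Next I would pass through the Aschbacher geometric classes for a primitive irreducible $G\leq \GL_n(p)$. The field-stabilizing class (semilinear subgroups) gives exactly conclusion~(2), so I may assume $G\not\leq \GaL_1(p^n)$. Tensor-decomposable and tensor-induced classes should be eliminated by showing that such a product structure forces a $p$-element with non-trivial fixed space, contradicting $p$-exceptionality; the extraspecial-normalizer class and the classical-group class can be handled by direct orbit counts, using that in those cases $n$ and the orbit lengths on $V$ are controlled by the geometry (quadratic/symplectic/unitary forms) and the numerical constraint $\sum \ell_i=p^n-1$ with all $\ell_i$ coprime to $p$ leaves essentially no room beyond the transitive case~(1).

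The substantive case is the \emph{nearly simple} one, where $G$ has a unique minimal normal subgroup which is quasisimple modulo its centre. Here I would invoke the Classification of Finite Simple Groups and run through the possible simple composition factors $S$ together with their faithful absolutely irreducible representations over $\overline{\FF}_p$ of small dimension; Hering's theorem catalogues exactly those giving transitivity on $V\setminus\{0\}$, producing conclusion~(1). For the remaining pairs $(S,V)$ I would test the $p$-exceptionality condition by computing the permutation character of $G$ on $V$ from Brauer character values, or by determining stabilizers of typical vectors via the known maximal-subgroup structure of $S$. Almost all candidates are eliminated because $p$ divides at least one orbit length; the handful that survive are the sporadic and small-rank examples listed in (3)(i)--(v), whose orbit structures must finally be verified directly (for instance, the alternating/symmetric family acts on the deleted permutation module over $\FF_2$ with orbits indexed by weight classes of binary vectors of specific parity).

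The main obstacle is precisely this last step: the enumeration of low-dimensional faithful representations of quasisimple groups over $\FF_p$ and the explicit orbit analysis on each of them. This is where the bulk of the work in \cite{GLPST} lies, because one needs sharp dimension bounds in cross characteristic (Landazuri--Seitz--type estimates), information on the modular character tables of the relevant small groups, and a careful treatment of the case $p=2$, where the deleted permutation modules for $\fA_c$ and $\fS_c$ with $c=2^r-1,\,2^r$ produce the one genuine infinite family in (3)(i) and where small-orbit coincidences are easiest to achieve. Ruling out all other near-misses one by one — and confirming that the five sporadic configurations (ii)--(v) really do satisfy $p$-exceptionality with the stated orbit lengths — is the computational heart of the proof.
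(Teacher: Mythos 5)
You should first note that the paper does not prove this statement at all: it is quoted verbatim as Theorem~1 of \cite{GLPST}, and the ``proof'' in the paper is simply that citation. Your proposal instead sketches a from-scratch derivation of that deep classification theorem. As an outline it does follow the general shape of the actual argument in \cite{GLPST} (structure theory of primitive irreducible linear groups, Hering's theorem for the transitive case, CFSG together with low-dimensional cross-characteristic representation theory of quasisimple groups, and explicit orbit computations for the surviving candidates), but every substantive step --- the enumeration of candidate modules, the dimension bounds, the stabiliser and orbit-length computations --- is deferred, so what you have written is a research programme rather than a proof.

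More importantly, the one piece of mathematics you do commit to is wrong, and it is load-bearing for your plan. From the fact that all $G$-orbits on $V$ have $p'$-size you conclude that ``every non-trivial $p$-element of $G$ acts without non-zero fixed points on $V$'', and you then propose to eliminate the tensor-decomposable, tensor-induced and extraspecial-normaliser classes by exhibiting a $p$-element with non-trivial fixed space. The implication is exactly reversed: every $p$-element of $\GL_n(p)$ is unipotent and therefore always has a non-zero fixed space on $V$, and $p$-exceptionality forces even more fixed points, since a $p$-subgroup acting on an orbit of $p'$-size must fix a point of that orbit. So having a $p$-element with non-trivial fixed space is automatic and contradicts nothing; the ``strong regularity condition'' you rely on does not exist, and the proposed eliminations of the geometric Aschbacher classes collapse. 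Handling those classes (and the nearly simple case) requires genuinely different arguments --- orbit-counting estimates, module restrictions, and the detailed case analysis that constitutes the bulk of \cite{GLPST} --- none of which is supplied or correctly initiated by the sketch.
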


\begin{proof}
This is Theorem 1 of \cite{GLPST}.
\end{proof}

The classification of the groups in case (1) was achieved by Hering in another
deep theorem (see \cite{He74}).  We will use the description of these groups
that appears in Appendix~1 of \cite{liebeck}. For the imprimitive case of
Theorem~\ref{t1}, a result on permutation groups is necessary. Given a
prime~$p$, a subgroup $G\leq\fS_n$ is \emph{$p$-concealed} if it has order
divisible by~$p$ and all the orbits on the power set of $\{1,\dots,n\}$ have
$p'$-size.

\begin{thm}   \label{t2}
 Let $H$ be a primitive subgroup of $\fS_n$ of order divisible by a prime $p$.
 Then $H$ is $p$-concealed if and only if one of the following holds:
 \begin{enumerate}[\rm(1)]
  \item $\fA_n\trianglelefteq H\leq \fS_n$, and $n=ap^s-1$ with $s\geq 1$,
   $a\leq p-1$ and $(a,s)\neq(1,1)$; also $H\neq\fA_3$ if $(n,p)=(3,2)$;
  \item $(n,p)=(8,3)$, and $H=\AGL_3(2)=2^3:\SL_3(2)$ or
   $H=\AGaL_1(8)=2^3:7:3$;
  \item $(n,p)=(5,2)$ and $H=D_{10}$. 
 \end{enumerate}
 In cases $(2)$ and $(3)$ there exists $\chi\in\Irr(H)$ such that
 $\chi(1)_p=p$. 
\end{thm}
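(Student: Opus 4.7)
The plan is to combine the O'Nan--Scott classification of primitive permutation groups with Theorem~\ref{t1} and a case-by-case orbit analysis. Since $H$ is transitive, the singletons form an orbit of length $n$, so $p$-concealedness forces $p\nmid n$ at once.

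For the almost simple case with $\fA_n\trianglelefteq H\le\fS_n$ in its natural action, $H$ is $(n-2)$-transitive and hence acts transitively on $k$-subsets for every $0\le k\le n$, so the $H$-orbit on $k$-subsets has length $\binom{n}{k}$. By Kummer's theorem, $p\nmid\binom{n}{k}$ for every $0\le k\le n$ is equivalent to the base-$p$ expansion of $n+1$ having a single nonzero digit, i.e., $n+1=ap^s$ with $1\le a\le p-1$ and $s\ge 1$. Together with $p\mid|H|$, this yields case~(1) (with the exclusion of $\fA_3$ at $(n,p)=(3,2)$ reflecting that $|\fA_3|$ is odd), and simultaneously establishes the ``if'' direction in case~(1).

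For the affine case $H=V\rtimes H_0$ with $V=\FF_q^m$ and $n=q^m$, the condition $p\nmid n$ forces $q\ne p$. A short stabilizer computation shows that orbits of $H$ on 2-subsets of $V$ correspond (up to the $v\leftrightarrow-v$ identification when $p$ is odd) to $H_0$-orbits on $V^\times$, so $p$-concealedness on 2-subsets is equivalent to $H_0$ being $p$-exceptional, and Theorem~\ref{t1} applies. Going through Theorem~\ref{t1}(1)--(3) and imposing the orbit condition on $k$-subsets with $k\ge 3$ leaves only $(n,p)=(8,3)$ with $H\in\{\AGL_3(2),\AGaL_1(8)\}$ and $(n,p)=(5,2)$ with $H=D_{10}$; these three groups are checked directly to be $p$-concealed, yielding cases~(2) and~(3) together with their converses. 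For primitive $H$ of a different type --- almost simple $H$ whose socle is not an alternating group in its natural action, or groups of product, diagonal, or twisted-wreath type --- I would use the classification of primitive actions of simple groups together with orbit analyses on 2- and 3-subsets to eliminate every candidate. The character assertion in cases~(2) and~(3) is read off the character tables of the three groups.

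The main obstacle will be the affine case: Theorem~\ref{t1} provides a skeleton, but the additional $k$-subset conditions for $k\ge 3$ must be verified for each primitive linear group in Hering's list, and the arithmetic for groups inside $\GaL_1(q^m)$ (case (2) of Theorem~\ref{t1}) is particularly delicate.
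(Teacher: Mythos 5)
You are attempting to reprove the classification itself, whereas the paper's proof of this statement consists of a citation — the first part is exactly \cite[Thm~2]{GLPST} — together with a direct check of the final assertion about a character of $p$-part $p$ in cases (2) and (3) (your character-table remark covers that easy part). Re-deriving the classification is a far harder task than the paper undertakes, and your sketch has a concrete flaw at its central step: in the affine case $H=V\rtimes H_0$ with $n=|V|=r^d$, the condition $p\nmid n$ forces $r\ne p$, so $H_0$ is a subgroup of $\GL_d(r)$ acting on a module in characteristic $r\ne p$. Theorem~\ref{t1} classifies $p$-exceptional subgroups of $\GL_d(p)$, i.e.\ the \emph{defining}-characteristic situation, and simply does not apply here; indeed the actual examples in cases (2) and (3), namely $\AGL_3(2)$ and $\AGaL_1(8)$ with $p=3$ and $D_{10}$ with $p=2$, are cross-characteristic affine groups, so your proposed reduction ``$p$-concealed on $2$-subsets $\Leftrightarrow$ $H_0$ is $p$-exceptional, then quote Theorem~\ref{t1}'' collapses exactly where the interesting examples live. (There is also a smaller slip: the $v\leftrightarrow -v$ identification on $2$-subsets depends on the characteristic $r$ of $V$, not on $p$, and it can change orbit lengths by a factor of $2$, which matters when $p=2$.)

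Beyond that, the elimination of every other primitive type — almost simple groups whose socle is not $\fA_n$ in its natural action (including the many non-natural primitive actions of $\fA_n$ and $\fS_n$, the $2$-transitive groups of Lie type, etc.), plus product, diagonal and twisted wreath types — is asserted in one sentence with no argument. That elimination is the bulk of the proof of \cite[Thm~2]{GLPST}; it rests on the Classification of Finite Simple Groups and a lengthy case analysis of orbits on the power set, and it cannot be dispatched by ``orbit analyses on 2- and 3-subsets'' in general. Your treatment of the natural $\fA_n/\fS_n$ case via Kummer's theorem is correct, and it explains the shape of case (1), but as written the proposal neither proves the theorem nor matches the paper's (much shorter) route, which is to cite \cite{GLPST} and only verify the last sentence of the statement.
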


\begin{proof}
The first part is \cite[Thm~2]{GLPST}. The final assertion can be easily
checked.
\end{proof}

Given a finite group $G$, we write $\bO^{p'}(G)$ to denote the smallest normal
subgroup of $G$ with $p'$ factor group, similarly $\bO^p(G)$. Analogously, we
write $\bO_{p'}(G)$ (resp.~$\bO_{p}(G)$) to denote the largest normal
$p'$-subgroup (resp. $p$-subgroup) of $G$.

\begin{thm}   \label{t3}
 Suppose $G\leq \GL_n(p)=\GL(V)$ is irreducible and $p$-exceptional with
 $G=\bO^{p'}(G)$. If $V=V_1\oplus\cdots\oplus V_n$ ($n>1$) is an
 imprimitivity decomposition for $G$, then $G_{V_1}$ is transitive on
 $V_1\setminus\{0\}$ and $G$ induces a primitive $p$-concealed subgroup of
 $\fS_\Om$, where $\Om=\{V_1,\dots, V_n\}$.
\end{thm}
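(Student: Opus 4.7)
The plan is to proceed in two parts. Let $K$ denote the kernel of the action $G\to\fS_\Om$ and write $\bar G=G/K$. First I would show that $\bar G$ is a primitive $p$-concealed subgroup of $\fS_\Om$; second, that $G_{V_1}$ acts transitively on $V_1\setminus\{0\}$.

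For the first part, primitivity of $\bar G$ on $\Om$ is forced by the minimality of the imprimitivity decomposition: any nontrivial block system $\{\Om_1,\ldots,\Om_k\}$ on $\Om$ would yield a strictly coarser $G$-invariant direct sum decomposition $V=\bigoplus_j V_{\Om_j}$, where $V_{\Om_j}=\bigoplus_{V_i\in\Om_j}V_i$, contradicting minimality. Non-triviality of $\bar G$ follows from the irreducibility of $V$ as $G$-module (else $G=K$ would preserve each $V_i$, which is impossible since $n>1$). The hypothesis $G=\bO^{p'}(G)$ then passes to the quotient $\bar G$, forcing $p\mid|\bar G|$: a nontrivial $p'$-group $\bar G$ would be a forbidden $p'$-quotient of $G$. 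For $p$-concealment, given nonempty $T\subseteq\Om$, I pick a vector $v=\sum_{V_i\in T}v_i$ with each $v_i\in V_i\setminus\{0\}$. The setwise stabilizer $G_T$ contains $G_v$, so $[G:G_T]$ divides $[G:G_v]$, which is coprime to $p$ by $p$-exceptionality of $G$. Since $K\le G_T$, this quotient equals the $\bar G$-orbit length of $T$ on $2^\Om$, yielding $p$-concealment.

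For the second part, let $H\le\GL(V_1)$ denote the image of $G_{V_1}$. The same minimality argument, now applied within $V_1$, shows that $H$ is both irreducible and primitive on $V_1$: any $H$-invariant proper nonzero subspace $W\le V_1$, or any $H$-imprimitive decomposition $V_1=\bigoplus_j U_j$, would lift, via coset representatives $g_i\in G$ with $g_iV_1=V_i$ (well-defined on $W$ and on the set $\{U_j\}$ because $G_{V_1}$ stabilizes them), to a $G$-invariant subspace strictly between $0$ and $V$ or to a proper $G$-invariant refinement of $V=\bigoplus V_i$ -- either way a contradiction with the hypotheses. Moreover, $H$-orbits on $V_1$ agree with $G_{V_1}$-orbits, which in turn are the intersections with $V_1$ of the corresponding $G$-orbits, and hence have $p'$-length; so, provided $p\mid|H|$, the group $H$ is $p$-exceptional and Theorem~\ref{t1} applies.

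The main obstacle is the ensuing case analysis: I must show that $H$ falls into case~(1) of Theorem~\ref{t1}, so is transitive on $V_1\setminus\{0\}$, and not into the exceptional cases (2) or (3). In each sporadic case the detailed structure of $H$, combined with the list of possible $\bar G$ from Theorem~\ref{t2} (which inherits $\bar G=\bO^{p'}(\bar G)$ from $G=\bO^{p'}(G)$), must be shown to conflict with $p$-exceptionality of $G$, with irreducibility of $V$, or with the generation hypothesis on $G$; the corner case $p\nmid|H|$, in which $H$ is a primitive irreducible $p'$-subgroup of $\GL(V_1)$, requires a separate but similar treatment. This classification-style check, in the spirit of the reduction to the primitive case in \cite{GLPST}, is where the bulk of the work lies.
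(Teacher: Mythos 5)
The paper does not actually prove Theorem~\ref{t3}: its ``proof'' is the citation \cite[Thm~3]{GLPST}, so the real question is whether your sketch could replace that deep result. Your first part is essentially fine, granted the intended reading (as in \cite{GLPST}) that the summands $V_i$ have maximal dimension, i.e.\ the number of blocks is minimal: then a nontrivial block system on $\Om$ would coarsen the decomposition, $G=\bO^{p'}(G)$ passes to the image in $\fS_\Om$ and forces $p$ to divide its order, and the stabiliser comparison $G_v\le G_T$ for $v=\sum_{V_i\in T}v_i$ gives $p$-concealment. This is the routine half of the theorem.

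The second part, however, contains genuine gaps. First, the reduction itself is flawed: to apply Theorem~\ref{t1} to $H=G_{V_1}^{V_1}$ you need $H$ primitive on $V_1$, and your argument --- that an $H$-invariant decomposition of $V_1$ lifts to a refinement of $V=\bigoplus_i V_i$, ``contradicting the hypotheses'' --- contradicts nothing: a refinement (more, smaller summands) is perfectly compatible with the hypotheses; only a coarsening conflicts with the normalisation (minimal number of blocks) that you already spent on making $\bar G$ primitive on $\Om$. You cannot arrange both primitivity of $\bar G$ on $\Om$ and primitivity of $H$ on $V_1$ by one choice of decomposition, so the case of imprimitive $H$ must be confronted (e.g.\ by a further induction), not assumed away. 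Second, and decisively, even granting that $H$ is irreducible, primitive and $p$-exceptional (your $p'$-orbit computation, via $G_v\le G_{V_1}$ for $0\ne v\in V_1$, is correct), the actual content of the theorem is that $H$ is transitive on $V_1\setminus\{0\}$, i.e.\ that the alternatives (2) and (3) of Theorem~\ref{t1} lead to transitivity or to a contradiction with the global hypotheses, plus the corner case $p\nmid|H|$. This is precisely what you defer as ``the ensuing case analysis \dots\ where the bulk of the work lies''; it is the CFSG-level substance of \cite[Thm~3]{GLPST}, and without it the proposal is a partial reduction, not a proof.
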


\begin{proof}
This is \cite[Thm~3]{GLPST}.
\end{proof}

We will use the following elementary result. In this lemma, the word block will
have a different meaning than in the rest of the paper. Suppose that a group
$G$ acts transitively on a set $\Om$, and let $\Delta\subseteq \Om$.
Recall that $\Delta$ is a \emph{block} if for all $g\in G$, either
$\Delta^g\cap \Delta=\emptyset$ or $\Delta^g=\Delta$.

\begin{lem}   \label{pconcealed}
 Suppose that $G$ acts transitively but not primitively on
 $\Om=\{1,\ldots, n\}$. Let $\Delta\subsetneq \Om=\{1,\ldots, n\}$ be a
 block of maximal order. Then:
 \begin{enumerate}[\rm(a)]
  \item $\Om=\Delta_1\cup\cdots\cup \Delta_r$, where $\Delta_i$ are the
   translates of $\Delta$. This union is disjoint and $|\Delta_i|=|\Delta|$
   for all $i$.
  \item Let $L=\bigcap_{i=1}^r\Stab_G(\Delta_i)$. Then the induced action
   of $G/L$ on $\{\Delta_1,\ldots,\Delta_r\}$ is primitive.
  \item If the action of $G$ on $\Om$ is $p$-concealed, the action of $G/L$
   on $\{\Delta_1,\ldots,\Delta_r\}$ is $p$-concealed.
 \end{enumerate}
\end{lem}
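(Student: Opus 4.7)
The plan is to prove (a) and (b) by standard block-theoretic arguments, and to obtain (c) from a $G$-equivariant injection of power sets (for the orbit-size half) together with a Sylow argument (for the order-divisibility half).

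For (a), distinct translates of $\Delta$ are pairwise disjoint by the block condition: if $\Delta^g\cap\Delta^h\ne\emptyset$ then $\Delta\cap\Delta^{h^{-1}g}\ne\emptyset$, forcing $\Delta=\Delta^{h^{-1}g}$, i.e.\ $\Delta^g=\Delta^h$. Hence all translates have the same size $|\Delta|$, and their union, being nonempty and $G$-invariant, equals $\Omega$ by transitivity. For (b), suppose the induced action of $G/L$ on $\{\Delta_1,\ldots,\Delta_r\}$ admits a proper non-singleton block $\mathcal B$. Then $\Delta':=\bigcup_{\Delta_i\in\mathcal B}\Delta_i$ has cardinality strictly between $|\Delta|$ and $n$, and because $G$ permutes the $\Delta_i$'s and $\mathcal B$ is a block in that permutation action, any translate $\Delta'^g$ either coincides with or is disjoint from $\Delta'$. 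So $\Delta'$ is a proper block of $G$ on $\Omega$ strictly larger than $\Delta$, contradicting maximality.

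For (c), the assignment $\mathcal B\mapsto\bigcup_{\Delta_i\in\mathcal B}\Delta_i$ is a $G$-equivariant injection from the power set of $\{\Delta_1,\ldots,\Delta_r\}$ into the power set of $\Omega$: equivariance is immediate and injectivity comes from (a), and moreover the stabilisers of $\mathcal B$ and of its image coincide. Hence every $G$-orbit on the source has the same size as its image orbit on the target, and the latter is a $p'$-number by the $p$-concealed hypothesis. For the remaining order-divisibility $p\mid|G/L|$, I would first replace $G$ by its image in $\mathfrak S_n$ (the kernel of the $\Omega$-action lies inside $L$, so the quotient action is unaffected), then note that both $n$ and $r$ are $p'$-numbers, as sizes of the singleton orbit and of the orbit of $\Delta_1$ in the power set, so that $|\Delta_1|=n/r$ is $p'$ as well. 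If a Sylow $p$-subgroup $P$ of $G$ were entirely contained in $L$, it would stabilise each $\Delta_i$ setwise; a suitable choice of $P$-invariant subset of $\Omega$ combined with the $p$-concealed hypothesis should then produce the required contradiction with $p\mid|G|$.

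Steps (a), (b), and the orbit-size half of (c) are essentially formal consequences of the definitions. The delicate point is the order-divisibility of $|G/L|$ by $p$: one must locate a subset of $\Omega$ whose $G$-orbit size genuinely reflects the non-trivial $p$-part of the $G/L$-action on $\{\Delta_1,\ldots,\Delta_r\}$, and this is where I expect the bulk of the technical work.
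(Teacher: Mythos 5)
Parts (a), (b) and the orbit-size half of (c) are correct, and your argument for the orbit sizes is essentially the paper's: the paper observes that for $\Gamma\subseteq\{\Delta_1,\dots,\Delta_r\}$ with union $\Gamma'\subseteq\Om$ one has $\Stab_{G/L}(\Gamma)=\Stab_G(\Gamma')/L$, so the $G/L$-orbit of $\Gamma$ has size $|G:\Stab_G(\Gamma')|$, prime to $p$ by hypothesis; this is your equivariant injection in different words.

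The genuine gap is the divisibility $p\mid|G/L|$, which you leave as a hoped-for contradiction from the assumption that a Sylow $p$-subgroup $P$ of $G$ lies in $L$. No such contradiction exists, so that step cannot be completed as planned: take $p=2$ and $G=\fS_3\wr C_3$ acting imprimitively on nine points with blocks $B_1,B_2,B_3$ of size three. For any subset $S$ of the nine points, an element of the base group $\fS_3^3\le G$ moves $S\cap B_i$, for each $i$, onto one of the four subsets of $B_i$ invariant under the $i$-th factor of the Sylow $2$-subgroup $P=C_2\times C_2\times C_2\le\fS_3^3$; hence every orbit on the power set is odd and $G$ is $2$-concealed, $B_1$ is a block of maximal order with translates $B_1,B_2,B_3$, yet $L=\fS_3^3\supseteq P$ and $G/L\cong C_3$ has odd order. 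So the order-divisibility part of (c) is not a consequence of the stated hypotheses, and indeed the paper's own proof establishes only the orbit-size condition. In the paper's applications this is harmless: there the acting group satisfies $\bO^{p'}(G)=G$ (Step 1 of the proof of Theorem \ref{thm:main}), so the quotient $G/L$, which is nontrivial because it acts transitively on $r\ge2$ blocks, automatically has order divisible by $p$. Either add the hypothesis $\bO^{p'}(G)=G$ to obtain the divisibility for free, or prove only the orbit-size condition; in any case the proposed Sylow contradiction should be dropped.
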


\begin{proof}
Part (a) and (b) are well-known.

For the last part, write $\bar{G}=G/L$. Suppose that
$\Gamma=\{\Delta_1,\ldots,\Delta_s\}$ is a block for the action of $\bar{G}$
and write $\Gamma'=\bigcup_{i=1}^s\Delta_i$. Notice that
we have $\Stab_{\bar{G}}(\Gamma)=\Stab_G(\Gamma')/L$. Now,
$$|\bar{G}:\Stab_{\bar{G}}(\Gamma)|=|G/L:\Stab_G(\Gamma')/L|
  =|G:\Stab_G(\Gamma')|$$
is not divisible by $p$.
\end{proof}

For a set $\Om$, we write $\fP(\Om)$ to denote the power set of
$\Om$. More generally, following \cite{Do}, for any integer $n>1$ we set
$$\fP_n(\Om)=\{(\Lambda_1,\dots,\Lambda_n)\mid\Om=\bigcup_{i=1}^n\Lambda_i,\ 
  \Lambda_i\cap\Lambda_j=\emptyset\text{\ for $i\neq j$}\}.
$$
By a theorem of Gluck, see \cite{Gl} or \cite[Thm~5.6]{MW}, an odd order
permutation group on $\Om$ has a regular orbit on $\fP(\Om)$ or, equivalently,
on $\fP_2(\Om)$ (see for instance \cite[Cor.~5.7(b)]{MW}).
As pointed out in \cite[Prop.~1]{Gl}, this is false for $2$-groups.
In \cite[Cor.~6]{Do} and independently in \cite[Thm~1.2]{Se} it was proved that
a solvable permutation group on $\Om$ has a regular orbit on $\fP_4(\Om)$.
We will need a regular orbit on $\fP_3(\Om)$ in the case when the group is
a $p$-group. (Again, as examples in \cite{Do} and \cite{Se} show, this is not
true for arbitrary solvable groups.)
By the previous comments, it suffices to do it when $p=2$.

\begin{lem}   \label{dol}
 Let $P\leq\fS_\Om$ be a $p$-group.  If $p$ is odd, then there exists a
 regular $P$-orbit on $\fP(\Om)$. If $p=2$ then there exists a regular
 $P$-orbit on $\fP_3(\Om)$.
\end{lem}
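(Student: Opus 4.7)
The plan is to handle the two cases separately. The odd-$p$ case is immediate: a $p$-group with $p$ odd has odd order, so Gluck's theorem recalled in the paragraph preceding the statement directly supplies a regular $P$-orbit on $\fP(\Om)$.

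For $p=2$, I proceed by induction on $|\Om|$, the case $|\Om|\leq 1$ being trivial. Let $P\leq\fS_\Om$ be a non-trivial $2$-group (acting faithfully by assumption) and assume the conclusion for faithful $2$-group actions on smaller sets. \emph{If $P$ is intransitive}, decompose $\Om=\Om_1\sqcup\Om_2$ into non-empty $P$-invariant subsets and set $K_i=\ker(P\to\fS_{\Om_i})$. Faithfulness of $P$ on $\Om$ forces $K_1\cap K_2=1$, so $K_1$ acts faithfully on $\Om_2$. Applying the inductive hypothesis to the faithful action of $P/K_1$ on $\Om_1$ produces $\pi_1\in\fP_3(\Om_1)$ with $P$-stabilizer exactly $K_1$, while applying it to $K_1$ on $\Om_2$ produces $\pi_2\in\fP_3(\Om_2)$ with trivial $K_1$-stabilizer. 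The partition of $\Om$ obtained by taking component-wise unions then has $P$-stabilizer $K_1\cap\mathrm{Stab}_P(\pi_2)=1$, as required.

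\emph{If $P$ is transitive}, a primitive action of $P$ has degree $p=2$ (every maximal subgroup of a $p$-group has index $p$), which is immediate. So $P$ is imprimitive; fix a non-trivial block system $\{\Delta_1,\ldots,\Delta_r\}$, let $K$ be its kernel on blocks, and let $\bar P=P/K$ be the induced faithful transitive $2$-group on $r<|\Om|$ blocks. Induction furnishes a regular $\bar P$-orbit on $\fP_3(\{\Delta_1,\ldots,\Delta_r\})$ whose monochromatic lift $\pi_0$ to $\Om$ satisfies $\mathrm{Stab}_P(\pi_0)=K$. Induction also applies to each restriction $K|_{\Delta_i}$, yielding within-block $3$-partitions of each $\Delta_i$ with trivial $K|_{\Delta_i}$-stabilizer.

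The main obstacle is combining $\pi_0$ with these within-block partitions to produce a single partition of $\Om$ whose $P$-stabilizer -- not merely its $K$-stabilizer -- is trivial; a naive union destroys the block coloring and may admit new stabilizers from $P\setminus K$. My intended strategy is to exploit the transitivity of $\bar P$ on the blocks by transporting a single chosen within-block $3$-partition of a ``reference'' block uniformly to every block via a fixed system of coset representatives of $K$ in $P$, so that any $g\in P$ preserving the combined partition is forced to preserve both the block coloring (hence to lie in $K$) and each within-block partition (hence to act trivially on every $\Delta_i$), and therefore $g=1$. Verifying that this transport is compatible with simultaneously trivialising the $K$-stabilizer is the delicate step.
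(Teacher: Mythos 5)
Your odd-$p$ half is exactly the paper's argument (Gluck's theorem, i.e.\ \cite[Cor.~5.7(b)]{MW}), and for $p=2$ your intransitive reduction and the primitive case (degree~$2$) are fine. But the transitive imprimitive case, which is the heart of the lemma, is a genuine gap, and the repair you sketch does not work as described. With the inductive hypothesis being mere \emph{existence} of one regular orbit, the step cannot be closed, and transporting a single reference $3$-partition uniformly to every block makes the blocks indistinguishable, which creates rather than kills stabilizers in $P\setminus K$: for $P=C_2\wr C_2\le\fS_4$ with blocks $\{1,2\},\{3,4\}$, transporting $(\{1\},\{2\},\emptyset)$ yields $(\{1,3\},\{2,4\},\emptyset)$, stabilized by $(1\,3)(2\,4)\notin K$. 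Nor can you then ``also incorporate'' $\pi_0$: the common refinement of two $3$-partitions has up to nine parts, and there is no evident way to encode both the block colouring and the within-block partitions in a single element of $\fP_3(\Om)$. This combination problem is not a removable technicality; it is exactly why the analogous statement fails for $\fP_2(\Om)$ for $2$-groups \cite{Gl} and for $\fP_3(\Om)$ for general solvable groups (examples in \cite{Do,Se}), so any correct proof must feed more than bare existence of regular orbits into the induction.

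What is needed is a stronger inductive statement guaranteeing \emph{several} regular orbits, so that the within-block partitions can vary from block to block and thereby themselves encode a distinguishing colouring of the blocks. Concretely, for a transitive $2$-group a maximal block system has exactly two blocks $\Delta_1,\Delta_2$ (the induced primitive $2$-group is $C_2$), with kernel $K$ and some $g_0\in P\setminus K$; pick $c_1\in\fP_3(\Delta_1)$ in a regular orbit of $K|_{\Delta_1}$, i.e.\ with $\Stab_K(c_1)=K_1$, the kernel of $K$ on $\Delta_1$. The partitions of $\Delta_2$ that could be matched with $c_1$ by an element of $P\setminus K$ form the single $K$-orbit of the translate $g_0(c_1)$, which is itself a regular orbit of $K|_{\Delta_2}$; so if $K|_{\Delta_2}$ is known to have at least \emph{two} regular orbits on $\fP_3(\Delta_2)$, one can choose $c_2$ outside this bad orbit, and the combined partition has $P$-stabilizer $K_1\cap K_2=1$. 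So either strengthen your induction to such a multiplicity statement (and verify it persists through the intransitive and two-block steps), or do what the paper does: invoke \cite[Lemma~1(c)]{Do} (every proper primitive group has at least four regular orbits on $\fP_3$) together with the reduction theorem \cite[Thm~2]{Do}, which is precisely the machinery that carries out the combination step you left open.
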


\begin{proof}
The first part follows from \cite[Cor.~5.7(b)]{MW}.
The second part is a consequence of work in \cite{Do}. Indeed, by
\cite[Lemma~1(c)]{Do}, every proper primitive group $H<\fS_\Om$ has at
least four regular orbits on $\fP_3(\Om)$. It follows from \cite[Thm~2]{Do}
that $P$ has a regular orbit on $\fP_3(\Om)$, as desired.
\end{proof}

\section{Results on almost simple groups}   \label{sec:simple}
In this section we collect some results on almost simple groups that will be
used in the proof of Theorem 1.

\subsection{Miscellaneous results}
We start with an elementary result on the groups $\PSL_2$. 

\begin{lem}   \label{lemasl}
 Let $S\cong\SL_2(p^n)$ or $S\cong\PSL_2(p^n)$, with $(p,n)\neq (2,1),(3,1)$.
 Let $R\in\Syl_p(S)$. Then:
 \begin{enumerate}[\rm(a)]
  \item If $R\leq H<S$, then $H\leq \bN_S(R)$.
  \item If $Q\in\Syl_p(S)$ with $Q\neq R$, then $Q\cap R=1$.
 \end{enumerate} 
\end{lem}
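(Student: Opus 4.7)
My plan is to deduce both parts from the classical Dickson classification of the subgroups of $\PSL_2(p^n)$, transferred to $\SL_2(p^n)$ through the center $Z(S)=\{\pm I\}$, together with the well-known structure of the Sylow $p$-subgroups of $\SL_2$-type groups and of the centralizers of their non-identity elements.

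For part (a), I would note that $R$ is elementary abelian of order $q:=p^n$, and consult Dickson's list of the maximal subgroups of $\PSL_2(q)$: the Borel $\bN_S(R)$ of order $q(q-1)/d$ with $d=\gcd(2,q-1)$; the dihedral groups of orders $2(q\pm1)/d$; the possibly occurring exceptional subgroups $\fA_4$, $\fS_4$, $\fA_5$; and the subfield subgroups $\PSL_2(p^m)$ or $\PGL_2(p^m)$ for $m\mid n$, $m<n$. Only the Borel subgroups have order divisible by $q$; the exclusions $(p,n)\neq(2,1),(3,1)$ guarantee that this list correctly describes the proper subgroup structure. Therefore any proper subgroup $H\leq S$ containing $R$ lies in a Borel subgroup, and since $R$ has a unique fixed point on the projective line $\mathbb{P}^1(\FF_q)$, there is a unique Borel containing $R$, namely $\bN_S(R)$, yielding $H\leq\bN_S(R)$. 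The same reasoning applies to $\SL_2(p^n)$ by lifting through $Z(S)$.

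For part (b), I would identify the unique Sylow $p$-subgroup containing a given non-identity $p$-element. Any $1\neq u\in R$ is unipotent on the natural $\SL_2$-module, fixes a unique line in $\FF_q^2$, and so lies in exactly one Borel subgroup; the unipotent radical of that Borel is the unique Sylow $p$-subgroup of $S$ containing $u$. If $R\cap Q\neq1$, picking $u$ in the intersection forces both $R$ and $Q$ to coincide with this unipotent radical, contradicting $Q\neq R$. Equivalently, since $R$ and $Q$ are abelian they both lie in $\bC_S(u)$, whose unique Sylow $p$-subgroup has order $q=|R|=|Q|$. No step should present a real obstacle: the proof is essentially bookkeeping around Dickson's theorem and the unipotent geometry of $\SL_2$, with mild care needed only to confirm that the excluded cases $(p,n)=(2,1),(3,1)$ are precisely the ones in which the maximal subgroup list or the ``proper Borel'' hypothesis degenerates; once they are removed, the argument is uniform across $\SL_2(p^n)$ and $\PSL_2(p^n)$.
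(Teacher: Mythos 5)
Your proof is correct and follows essentially the same route as the paper: part (a) via Dickson's classification (the paper cites Huppert, Hauptsatz II.8.27) with the observation that only Borel subgroups can contain a full Sylow $p$-subgroup, part (b) via the trivial-intersection property coming from the action on the projective line (the paper cites Huppert, Satz II.8.2), and the $\SL_2$ case by passing through the central $p'$-subgroup $\bZ(S)$. The only detail you gloss over, which the paper makes explicit, is the lifting step for $\SL_2(p^n)$: one should note that $HZ/Z$ is still proper (using that $\SL_2(p^n)$ is perfect for $p^n>3$) and that $R$ is characteristic in $RZ$ because $Z$ is a $p'$-group, but these are routine.
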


\begin{proof}
Suppose first that $S\cong \PSL_2(p^n)$. Part (a) follows from Dickson's
classification of the subgroups of $\PSL_2(p^n)$ (see
\cite[Hauptsatz~II.8.27]{hup}) and part~(b) follows from
\cite[Satz~II.8.2]{hup}. Now, suppose that $S=\SL_2(p^n)$ and let $Z=\bZ(S)$
so that $S/Z$ is as in the previous paragraph. Let $H<S$ containing
$R\in\Syl_p(S)$. Therefore, $HZ/Z\leq \bN_{S/Z}(RZ/Z)$. Since $Z$ is a
$p'$-group, $R$ is normal in $H$, as wanted. Claim (b) also follows from the
previous case. 
\end{proof}

Next, we obtain a result on the character degrees of alternating groups.

\begin{lem}   \label{alt}
 Let $p$ be an odd prime and $p\leq n<p^2$, $n>4$ and $(n,p)\neq (6,3)$. Then
 there exists $\chi\in\Irr(\fA_n)$ such that $\chi(1)_p=p$.
\end{lem}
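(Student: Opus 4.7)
The plan is to exhibit, for each admissible pair $(n,p)$, a partition $\la\vdash n$ such that the irreducible character $\chi^\la$ of $\fS_n$ has $\chi^\la(1)_p=p$. Then, since $p$ is odd, any irreducible constituent of the restriction $\chi^\la\!\downarrow_{\fA_n}$ has degree equal to $\chi^\la(1)$ (when $\la\ne\la'$) or to $\chi^\la(1)/2$ (when $\la=\la'$), so its $p$-part is still~$p$, giving the desired $\chi\in\Irr(\fA_n)$. Write $n=ap+b$ with $0\le b<p$; the bound $n<p^2$ forces $1\le a\le p-1$.

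I split into three cases. If $b\ge 1$, I take the hook partition $\la=(n-p+1,1^{p-1})$, whose degree by the hook length formula equals $\binom{n-1}{p-1}$; Kummer's theorem reduces the computation of the $p$-part of this binomial to counting carries when adding $p-1$ and $n-p=(a-1)p+b$ in base $p$. The rightmost digits give $b+(p-1)\ge p$, producing a single carry, while the next digit $a\le p-1$ produces none, so the $p$-part is exactly $p$. If $b=0$ and $a\ge 2$, I take the two-row partition $\la=(n-p+1,p-1)$, whose degree is $\binom{n}{p-1}\cdot\frac{n-2p+3}{n-p+2}$. Kummer's theorem again gives $p$-part $p$ for the binomial, and the remaining factors $(a-2)p+3$ and $(a-1)p+2$ are both coprime to~$p$ as long as $p\ge 5$; the only admissible $(n,p)$ with $p=3$ and $a\ge 2$ is $(n,p)=(6,3)$, excluded by hypothesis. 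Finally, if $n=p$ then $p\ge 5$ by $n>4$, and I take $\la=(p-2,2)$: a direct inspection of its Young diagram shows that every hook of $\la$ has length at most $p-1$, hence coprime to~$p$, so $\chi^\la(1)_p=(p!)_p=p$.

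The main obstacle is really bookkeeping: these three cases together must exhaust every admissible $(n,p)$, the exclusion $(n,p)\ne(6,3)$ is needed precisely to avoid the $p=3$ failure of the two-row case (where $(a-2)p+3=3(a-1)$ absorbs an extra factor of $p$), and the subcase $n=p$ must be separated out because the ``two-row partition'' $(1,p-1)$ is not a valid Young diagram. The computations themselves are routine applications of the hook length formula and Kummer's theorem.
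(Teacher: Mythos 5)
Your proof is correct and follows essentially the same route as the paper: reduce to $\fS_n$ (harmless since $p$ is odd) and exhibit explicit partitions whose character degree has $p$-part exactly $p$ via the hook length formula. The only differences are cosmetic — the paper takes the hook $(ap,1^b)$ when $b\neq 0$ and $(ap-2,2)$ when $b=0$, disposing of $p=3$ by direct inspection, whereas you use $(n-p+1,1^{p-1})$, $(n-p+1,p-1)$ and $(p-2,2)$ together with Kummer's theorem, which lets you treat $p=3$ uniformly.
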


\begin{proof}
If $p=3$, the result can be easily checked. So we may assume that $p>3$. Write
$n=ap+b$ with $1\leq a<p$ and $0\leq b<p$. By the hook length formula
\cite[Thm~2.3.21]{JK}, given a partition $\la$ of $n$, the character
$\chi^\la\in\Irr(\fS_n)$ indexed by $\la$ has degree
$$\chi^\la(1)=\frac{n!}{\prod_{i,j}h_{ij}^{\la}},$$
where $h_{ij}^\lambda$ are the hooklengths of $\la$. Let $\chi\in\Irr(\fA_n)$
under $\chi^\la$. Since $p$ is odd, we have $\chi(1)_p=\chi^\la(1)_p$, so
it is enough to find a character $\chi^\la\in\Irr(\fS_n)$ such that
$\chi^\la(1)_p=p$. Since $(n!)_p=p^a$, we need to find a partition $\la$ such
that $\prod_{i,j}h_{ij}^{\la}=p^{a-1}$. If $b\neq 0$, we take the partition
$\la=(ap,1,1,\ldots,1)$. If $b=0$, we take the partition $\la=(ap-2,2)$.
\end{proof}

We need the following character extendability result in almost simple groups.
As usual, given a group $G$ and a prime $p$, $\Irr_{p'}(B_0(G))$ is the set of
$p'$-degree irreducible characters in the principal $p$-block of $G$.

\begin{lem}   \label{simples}
 Let $p$ be a prime and let $S$ be a non-abelian simple group. Then there exists
 $1_S\neq \alpha\in\Irr_{p'}(B_0(S))$ which is $X$-invariant for some Sylow
 $p$-subgroup $X\in\Syl_{p}(\Aut(S))$.
\end{lem}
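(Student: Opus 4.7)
The plan is to proceed by the Classification of Finite Simple Groups. First observe that the conclusion is equivalent to finding $\alpha\in\Irr_{p'}(B_0(S))\setminus\{1_S\}$ whose $\Aut(S)$-orbit has length coprime to $p$: in that case $\Stab_{\Aut(S)}(\alpha)$ contains a Sylow $p$-subgroup $X$ of $\Aut(S)$, and $\alpha$ is $X$-invariant. Hence the goal reduces to producing a nontrivial $p'$-degree character in $B_0(S)$ whose $\Aut(S)$-orbit has length coprime to~$p$.

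For alternating groups $S=\fA_n$ with $n\ge 5$, I would refine the hook-length argument of Lemma~\ref{alt} to select a partition $\la\vdash n$ with $\chi^\la(1)_p=1$ and empty $p$-core, so that $\chi^\la\in B_0(\fS_n)$ by the Nakayama Conjecture, and its restriction lies in $B_0(\fA_n)$; choosing $\la$ non-self-conjugate forces $(\chi^\la)_{\fA_n}$ to be irreducible and $\fS_n$-invariant, which suffices since $\Aut(\fA_n)=\fS_n$ for $n\neq 6$. The case $n=6$ needs separate attention because of the exceptional graph automorphism. For the sporadic simple groups (and the Tits group), the conclusion follows by direct inspection of ATLAS character tables, since $|\Out(S)|\le 4$ makes the orbit check immediate.

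For groups of Lie type $S$, the argument splits according to whether $p$ is the defining characteristic. In non-defining characteristic I would use a nontrivial unipotent character of $p'$-degree --- for instance the Steinberg character (whose degree is a $p'$-number), or a smaller-degree constituent from a suitable Harish--Chandra series --- placed in $B_0(S)$ via Brou\'e's theory of $e$-cuspidal pairs. Unipotent characters are $\Aut(S)$-invariant up to the well-understood action of graph-field automorphisms, so the orbit count is manageable. In defining characteristic, the $p$-blocks of $S$ consist of the principal block together with blocks of defect zero (a theorem of Humphreys), so any nontrivial $p'$-degree character automatically lies in $B_0(S)$; here I would take a semisimple Deligne--Lusztig character $R_\bT^\bG(\theta)$ for a regular $\theta$ of $p'$-order in a suitable maximal torus, arranging that the $\theta$-orbit under diagonal and field automorphisms has length coprime to $p$.

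The main obstacle will be verifying the $p'$-orbit condition in cases where $\Out(S)$ has a nontrivial $p$-part: e.g.\ $\PSL_n(q)$ with $p\mid\gcd(n,q-1)$ (diagonal automorphisms), types $D_n$ or $E_6$ when $p=2$ or $p=3$ respectively (graph automorphisms), and any family when $p$ divides the degree of $q$ over the prime field (field automorphisms). Careful selection of $\theta$ or of the unipotent character, together with direct computation for the usual short list of small-rank small-field groups, will be needed to close the argument.
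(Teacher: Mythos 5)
Your opening reduction (a nontrivial $p'$-degree character of $B_0(S)$ is $X$-invariant for some $X\in\Syl_p(\Aut(S))$ if and only if its $\Aut(S)$-orbit has $p'$-length) is correct, and the CFSG subdivision is the natural one; but be aware that the paper does not reprove this statement at all: it quotes \cite[Prop.~2.1]{GRSS} for $p>3$ and \cite[Thm~C]{RSV} for $p\le3$, and your sketch leaves open exactly the points where those papers do their work. Moreover, two of your concrete sub-claims are off. In non-defining characteristic the Steinberg character need \emph{not} lie in $B_0(S)$: block membership of unipotent characters is governed by $e$-Harish-Chandra series ($e$-cores in type $A$, by Fong--Srinivasan), and for instance for $\PSL_5(q)$ and $p$ a primitive prime divisor of $q^2+q+1$ (so $e=3$) the $3$-cores of $(5)$ and $(1^5)$ differ, so $\mathrm{St}$ sits in a non-principal unipotent block. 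Thus the real task --- producing a nontrivial $p'$-degree, $\Aut(S)$-stable character \emph{inside} $B_0(S)$ --- is a case analysis in the spirit of \cite{BM15}/\cite{GRSS}, not a remark; your hedge ``a smaller-degree constituent from a suitable Harish--Chandra series'' is precisely the unproved step. Similarly, for $\fA_n$ the condition should be ``$\la$ has the same $p$-core as $(n)$'' (this is empty only when $p\mid n$), and the existence, for every $n$ and $p$, of a nontrivial, non-self-conjugate $\la$ with that $p$-core and $p'$-degree still needs an argument (plus $n=6$ and the $p=2$ self-conjugacy issues).

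The more serious gap is the one you flag yourself: the $p'$-orbit condition when $\Out(S)$ has nontrivial $p$-part --- $p$-power field automorphisms (in both defining and cross characteristic), graph automorphisms for $p=2,3$, and diagonal automorphisms for $\PSL_n(\eps q)$ with $p\mid\gcd(n,q-\eps)$. For $p\le3$ this is essentially the whole content of \cite[Thm~C]{RSV}, whose proof is a substantial analysis; saying ``careful selection \dots will be needed to close the argument'' concedes rather than closes it. In defining characteristic your semisimple-character plan also needs $s\in[\bG^*,\bG^*]$ so that the character is trivial on the centre and descends to $S$, and you must choose $s$ (equivalently $\theta$) inside the subfield group fixed by the $p$-part of the field automorphisms, and graph-stably when $p=2,3$; this is doable (compare how the paper engineers the \emph{opposite}, non-invariance, in Propositions~\ref{prop:field} and~\ref{prop:defchar}), but it is the proof, not a footnote. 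So: reasonable programme, correct first reduction, but as written it is an outline with the decisive cases unproved; either carry out that analysis or simply invoke \cite[Prop.~2.1]{GRSS} and \cite[Thm~C]{RSV} as the paper does.
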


\begin{proof}
This follows from \cite[Prop.~2.1]{GRSS} when $p>3$ and from \cite[Thm~C]{RSV}
when $p\leq3$.
\end{proof}

Finally, we prove a technical lemma that will be used several times in the
proof of Theorem 1.

\begin{lem}   \label{pcon}
 Let $p$ be a prime. Let $H$ be a primitive subgroup of $\fS_n$ of order
 divisible by $p$ with abelian Sylow $p$-subgroups such that $\bO^{p'}(H)=H$.
 If $H$ is $p$-concealed, then there exists $\chi\in\Irr(H)$ such that
 $\chi(1)_p=p$.
\end{lem}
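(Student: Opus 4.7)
The plan is to invoke Theorem~\ref{t2} and dispose of each of its three cases separately. In cases~(2) and~(3) of that theorem the desired character $\chi$ with $\chi(1)_p = p$ is provided explicitly by the final sentence of Theorem~\ref{t2}, so nothing needs to be proved there. All of the content of the lemma lives in case~(1), in which $\fA_n \trianglelefteq H \leq \fS_n$ with $n = ap^s - 1$, $s \geq 1$, $a \leq p-1$, $(a,s) \neq (1,1)$, and, when $(n,p) = (3,2)$, $H \neq \fA_3$.

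I then use the two extra hypotheses of the lemma to cut case~(1) down. First, from $\bO^{p'}(H) = H$: for $p$ odd, $\fS_n/\fA_n$ is a $p'$-group, so $\bO^{p'}(\fS_n) = \fA_n$ and therefore $H = \fA_n$; for $p = 2$ both $H = \fA_n$ (with $n \geq 4$, so that $\fA_n$ is perfect) and $H = \fS_n$ remain possible. Second, from the abelian Sylow $p$-subgroup hypothesis: a Sylow $p$-subgroup of $\fS_n$ (and, for odd $p$, of $\fA_n$) is abelian if and only if $n < p^2$, since otherwise it contains a copy of $C_p \wr C_p$. Combined with $n = ap^s - 1$ and $(a,s) \neq (1,1)$, this leaves either $s = 1$ with $2 \leq a \leq p-1$ (so $p+1 \leq n \leq p^2 - p - 1$) or $(a,s) = (1,2)$ (so $n = p^2 - 1$).

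For $p$ odd, every resulting $n$ satisfies $p \leq n < p^2$, $n > 4$, and $(n,p) \neq (6,3)$ (the only thing to verify is $p = 3$, where $n \in \{5,8\}$), so Lemma~\ref{alt} hands me a character $\chi \in \Irr(\fA_n) = \Irr(H)$ with $\chi(1)_p = p$. For $p = 2$ we have $n = 2^s - 1 \geq 3$, and the abelian-Sylow condition $n < 4$ together with $s \geq 2$ forces $n = 3$; the exclusion $H \neq \fA_3$ then pins down $H = \fS_3$, whose two-dimensional irreducible character has $\chi(1)_p = 2 = p$, as required.

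I do not expect any serious obstacle here: the argument is essentially bookkeeping once Theorem~\ref{t2} and Lemma~\ref{alt} are in hand. The only item that needs attention is confirming that the two additional hypotheses on $H$ really do leave only configurations in the scope of Lemma~\ref{alt}, together with the isolated case $H = \fS_3$ for $p = 2$.
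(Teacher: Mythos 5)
Your proposal is correct and takes essentially the same route as the paper: reduce via Theorem~\ref{t2} to case (1), then use $\bO^{p'}(H)=H$ together with the abelian Sylow hypothesis to force either $H=\fA_n$ with $p\le n<p^2$, $n>4$, $(n,p)\ne(6,3)$ (settled by Lemma~\ref{alt}) or $H=\fS_3$ with $p=2$. One tiny imprecision: for $p=2$ and $H=\fA_n$ the abelian-Sylow criterion is $n\le 5$ rather than $n<4$, but since $n=2^s-1$ with $s\ge2$ this still forces $n=3$, so the conclusion is unaffected.
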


\begin{proof}
By hypothesis, $H$ is one of the groups listed in Theorem~\ref{t2}. We have
already seen in Theorem~\ref{t2} that in cases (2) and (3) the result holds.
Now, assume that we are in case (1) of that theorem, so
$\fA_r\lhd H \leq \fS_r$, with $r=ap^s-1$, $s\geq 1$ and $(a,s)\neq (1,1)$. If
$r=2$, we have $p=3$ and $(a,s)=(1,1)$, a contradiction. If $r=3$ then $p=2$
and $H\neq \fA_3$ by hypothesis, so $H\cong \fS_3$, and there is
$\chi\in\Irr(H)$ of degree $2$. We are done in this case. If $r=4$, we have
$p=5$ and $(a,s)=(1,1)$, another contradiction. If $r=6$, then $p=7$ and
$(a,s)=(1,1)$, contradiction. Hence we have that $r\geq 5$, $r\neq 6$.  Since
$H$ has abelian Sylow $p$-subgroups, $r<p^2$. Suppose first that $H=\fA_r$.
Since $p$ divides $|H|$, $p\leq r$ and we are done by Lemma~\ref{alt}.
Similarly, if $H=\fS_r$, we deduce that $p=2$.  But then $H$ does not have
abelian Sylow $p$-subgroups, which is a contradiction.
\end{proof}

\subsection{Almost simple groups of Lie type}
We now aim to prove the following, which is slightly stronger than what is
needed later on:

\begin{thm}   \label{thm:almost}
 Let $A$ be almost simple with socle a simple group $S$ of Lie type, and $p$ a
 prime such that $A=O^{p'}(A)$. If $S<A$ and the Sylow $p$-subgroups of $A$ are
 non-abelian then there exists $\chi$ in the principal $p$-block of $A$ of
 height~1.
\end{thm}

That is, the minimal non-zero height in the principal block is as small as
possible. Note that the conclusion fails in general when $A=S$ and $p$ is the
defining characteristic of~$S$ (see \cite{BM15} for examples).
We will give the proof in several steps. 
We will use the following:

\begin{lem}   \label{lem:B0}
 Let $A$ be a finite group and let $S\lhd A$ be a perfect group. Let $p$ be a
 prime and let $\chi\in\Irr(B_0(S))$ with stabiliser $A_\chi$ in $A$. Suppose
 that one of the following holds:
 \begin{enumerate}[\rm(1)]
  \item $\chi(1)$ is not divisible by $p$ and $|A:A_\chi|_p=p$; or
  \item $\chi(1)_p=p$, $|A:A_\chi|_p=1$ and $\chi$ extends to $P$, where
   $P/S\in\Syl_p(A_\chi/S)$.
 \end{enumerate} 
 Then there exists $\psi\in\Irr(B_0(A))$ of height 1.
\end{lem}

\begin{proof}
Suppose that (1) or (2) holds. Then $\chi$ extends to $P$ by \cite[Cor.~6.28]{isa} or the hypothesis, where
$P/S\in\Syl_p(A_\chi/S)$. By \cite[Lemma 4.3]{Mu} there exists
$\vhi\in\Irr(B_0(A_\chi))$ with $\vhi(1)_p=\chi(1)_p$ lying over $\chi$.
By the Clifford correspondence we know that $\psi:=\vhi^A\in\Irr(A)$. By
\cite[Cors~6.2 and~6.7]{N98}, $\psi$ lies in $B_0(A)$ and
\begin{equation*}
\psi(1)_p=|A:A_\chi|_p\chi(1)_p=p.\qedhere\end{equation*}
\end{proof}

Throughout, let $\bG$ be a simple linear algebraic group over an algebraically
closed field~$k$ of characteristic~$r>0$. Let $\bT\le \bG$ be a maximal torus
contained in a Borel subgroup $\bB\le\bG$ and let $\bU$ be the unipotent
radical of~$\bB$. We denote by $\Phi$ the root system of $\bG$ with respect to
$\bT$, and by $\Phi^+\subset\Phi$ the set of positive roots with respect to
$\bB$. Write $\Delta\subset\Phi^+$ for the set of simple roots. For any
$\al\in\Phi^+$, we denote by $\bU_\al$ the corresponding root subgroup in $\bU$
normalized by $\bT$, and we choose an isomorphism
$x_\al:k\rightarrow \bU_\al$ (see e.g.~\cite[Sect.~8]{MT}).   \par
For any graph automorphism $\gamma$ of (the Dynkin diagram of) $\Phi$ and for
any power $q=r^f$, there is a Frobenius endomorphism $F:\bG\to\bG$ with the
property that $F(x_\al(c))=x_{\ga(\al)}(c^q)$ for all $\al\in\Phi$,
$c\in k$ (see \cite[Thm~1.15.2]{GLS}). We set $F_r:\bG\to\bG$,
$x_\al(c)\mapsto x_\al(c^r)$. Note that $F,F_r$ commute in their action on
$\bG$, and $\bT,\bB$ and $\bU$ are $F$- and $F_r$-stable. Furthermore $F$ acts
on the set of subgroups $\bU_\al$, which induces an action of $F$ on the space
$V=\RR\Phi$. The resulting map is $q\phi$, where $\phi$ is an automorphism of
$V$ of finite order. We recall the setup from \cite[\S23]{MT}. Write
$\pi:V\rightarrow V^{\phi}$ for the projection (Reynolds operator) onto the
fixed space $V^{\phi}$.
We define an equivalence relation $\sim$ on $\pi(\Phi)$ by setting
$\pi(\al)\sim\pi(\be)$ if and only if there is some positive $c\in\RR$ such
that $\pi(\al)=c\pi(\be)$, and we let $\widehat{\Phi}$ be the set of equivalence
classes under this relation. For $\al\in\Phi^+$, we write $\wal$ for the class
of $\pi(\al)$ and set $\bU_\wal=\prod_{\be:\pi(\be)\sim\pi(\al)}\bU_\be$. Then
$U_\wal=\bU^F_\wal$ is the corresponding root subgroup of $G:=\bG^F$.
The different possible root subgroups are described in~\cite[Table~2.4]{GLS},
for example. Now, by~\cite[Thm~2.3.7]{GLS},
$U=\bU^F=\prod_{\{\wal\mid\al\in\Phi^+\}}U_\wal$ is a Sylow $r$-subgroup of~$G$.

We first consider the case when $p$ is the defining prime for $S$. For this let
$\bG^*$ be dual to $\bG$ with corresponding Frobenius endomorphisms again
denoted $F,F_r$ and $G^*:=\bG^{*F}$.

\begin{lem}   \label{lem:s in dual}
 In the notation introduced above, let $A\le\Aut(G)$ and suppose $r=p$. Assume
 there is a semisimple element $s\in [G^*,G^*]$ such that the stabiliser of the
 $G^*$-conjugacy class of $s$ has index $p$ in $A$. Then $B_0(A)$ contains a
 character of height~1.
\end{lem}

\begin{proof}
Let $\chi\in\cE(G,s)$ be a semisimple character in the Lusztig series labelled
by~$s$. Then $\chi(1)$ is prime to $p$ by the degree formula
\cite[Thm~2.6.11 with Cor.~2.6.18]{GM20}. Furthermore, $\chi$ is trivial on
$Z(G)$ since $s\in[G^*,G^*]$ (see \cite[Lemma~4.4(ii)]{NT13}). By the main
result of \cite{Hum}, then $\chi\in\Irr(B_0(G))$ and thus $\chi$ can be
considered as a character in $\Irr(B_0(S))$ by \cite[Thms~9.9(c)
and~9.10]{N98}. 
Finally, by \cite[Prop.~7.2]{Tay} our assumption on the class of $s$ implies
that the stabiliser of $\chi$ in $A$ has index~$p$, so by Clifford theory and
Lemma~\ref{lem:B0} there is a character of height~1 above $\chi$ in
$\Irr(B_0(A))$.
\end{proof}

\begin{prop}   \label{prop:field}
 Let $S$ be a simple group of Lie type in characteristic~$p$, $\sigma$ a
 non-trivial $p$-power order field automorphism of $S$ and
 $A:=\langle S,\sigma\rangle\le\Aut(S)$. Let $P\in\Syl_p(A)$. Then:
 \begin{enumerate}[\rm(a)]
  \item there exists $\chi\in\Irr(B_0(A))$ of height~$1$; and
  \item there exists $\theta\in\Irr(P)$ of height~$1$.
 \end{enumerate}
 In particular, the EM-conjecture holds for the principal $p$-block of $A$.
\end{prop}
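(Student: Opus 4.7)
The plan is to establish parts (a) and (b) separately. Since $P$ is non-abelian by hypothesis, we have $\mh(P)\ge 1$ and (by Brauer's Height Zero Conjecture, already recorded in the introduction) $\mh(B_0(A))\ge 1$. So the two assertions together will force $\mh(B_0(A))=\mh(P)=1$, proving the EM-conjecture for $B_0(A)$.

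For part~(a) I would apply the preceding Lemma~\ref{lem:s in dual}. Let $\sigma^*\in\Aut(G^*)$ be the field automorphism of $G^*$ dual to $\sigma$; it again has $p$-power order. It suffices to produce a semisimple element $s\in[G^*,G^*]$ whose $G^*$-conjugacy class has stabiliser of index exactly $p$ in $A^*:=\langle G^*,\sigma^*\rangle$. I would place $s$ inside an $F$- and $\sigma^*$-stable maximal torus $T^*$ of $G^*$ (for instance a maximally split one), which is a direct product of groups of the form $\FF_{q^d}^\times$ on each of which $\sigma^*$ acts by a $p$-power Frobenius. Because $\sigma^*$ has $p$-power order at least $p$, the subfield fixed by $\sigma^{*p}$ strictly contains the subfield fixed by $\sigma^*$, so there exist elements of $T^*$ with $\langle\sigma^*\rangle$-orbit of length exactly $p$. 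A small adjustment inside the co-root subtorus arranges $s\in[G^*,G^*]$, and one verifies that Weyl-group conjugation does not shorten the orbit on the resulting $G^*$-class.

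For part~(b), write $P=U\rtimes\langle\sigma_0\rangle$, where $U=\bU^F\in\Syl_p(S)$ and $\sigma_0$ is a lift of a $p$-power order field automorphism normalising $U$ (such a splitting exists because any field automorphism of $S$ lifts to an automorphism of $S$ of the same order, chosen to normalise the fixed Sylow $p$-subgroup). The abelianisation $U/[U,U]$ decomposes as a direct sum of $\sigma_0$-invariant summands, each isomorphic to the additive group of some finite extension of $\FF_p$ on which $\sigma_0$ acts as a $p$-power Frobenius. Since $\sigma_0$ is non-trivial, there is a nonzero element in one such summand fixed by $\sigma_0^p$ but not by $\sigma_0$, yielding a linear character $\lambda\in\Irr(U)$ with $\langle\sigma_0\rangle$-orbit of length exactly $p$. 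The inertia group $I_P(\lambda)=U\langle\sigma_0^p\rangle$ has index $p$ in $P$; since $\lambda$ is linear and $I_P(\lambda)$-invariant and $U\cap\langle\sigma_0^p\rangle=1$, Lemma~\ref{sem} extends $\lambda$ to some $\widetilde\lambda\in\Irr(I_P(\lambda))$, and $\theta:=\widetilde\lambda^P\in\Irr(P)$ has degree $p$, i.e.\ height~$1$.

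The main obstacle will be part~(a): one must simultaneously ensure that $s$ can be placed in $[G^*,G^*]$ (so that the attached semisimple character is trivial on $Z(G)$ and descends to $S$), and that the $\langle\sigma^*\rangle$-orbit of the $G^*$-class of $s$ has length exactly $p$ -- neither merged into a shorter orbit by Weyl-group coincidences, nor lengthened by an insufficient gap between the $\sigma^*$- and $\sigma^{*p}$-fixed subfields. A uniform treatment across the twisted Lie types, along with separate handling of a few small exceptional cases (Suzuki and Ree groups, small defining characteristics) is likely to require some case analysis.
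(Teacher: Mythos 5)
Your part (b) is essentially the paper's argument (a field element $c$ fixed by $F_1^p$ but not $F_1$ gives a $\sigma$-orbit of length $p$ on $U/[U,U]$, hence on its linear characters, and Clifford theory yields a degree-$p$ character of $P$); note only that your structural claim about $U/[U,U]$ is not automatic -- the paper invokes Howlett's lemma, $[U,U]=\prod_{\al\in\Phi^+\setminus\Delta}U_\wal$, whose list of exceptions happens to admit no field automorphisms, and the small Ree groups ${}^2G_2(3^{2n+1})$ (where $p=3$ and order-$3$ field automorphisms do exist) need the separate description from \cite[\S23]{MT}, which you do not address.

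The genuine gap is in part (a), at exactly the point you flag as ``one verifies that Weyl-group conjugation does not shorten the orbit''. Lemma~\ref{lem:s in dual} needs the stabiliser of the \emph{$G^*$-conjugacy class} of $s$ to have index $p$ in $A^*$; choosing $s$ in a maximally split torus fixed by $\sigma^{*p}$ but not by $\sigma^*$ does not give this, since $\sigma^*(s)$ may well be conjugate to $s$ (by Weyl-group fusion, or twisted fusion in the non-split types), and your ``small adjustment'' into $[G^*,G^*]$ can further interfere with the choice. Verifying non-fusion uniformly across all types, including the twisted ones and the small Ree groups (which do occur here and which your maximally-split-torus picture does not cover), is the actual content of the proposition, and you defer it to unspecified case analysis. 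The paper avoids any fusion analysis by a different choice of $s$: it takes $s$ of order a primitive prime divisor $l$ of $\Phi_d(p^{pe})$ with $d$ maximal in the order polynomial of $\bG^*$, so that $l$ does not divide $|\bG^{*F_1}|$; if the $\bG^*$-class of $s$ were $F_1$-stable, the Lang--Steinberg theorem would produce an $F_1$-fixed element of order $l$, a contradiction. This settles all types at once, with one Zsygmondy exception ($p=2$, $d=3$, $e=1$, where $l=5$ is used instead) and a separate short argument for ${}^2G_2(q^2)$. Without an argument of this kind (or an explicit fusion check in each type), your proof of (a) is incomplete.
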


\begin{proof}
The assumptions imply that $S$ is not a Suzuki group nor a big Ree group (since
these are defined in characteristic~2 but only have odd order field
automorphisms). Let us postpone the treatment of the small Ree groups for the
moment. Thus, now $S=G/Z(G)$ for $\bG$ a simple algebraic group of simply
connected type as introduced
above, with $r=p$. By definition any field automorphism $\sigma$ of $S$ is
induced, up to inner automorphisms, by a suitable power $F_1:=F_p^e$, and up to
replacing $\sigma$ by a primitive power, we may even assume $e|f$. So let $e|f$
be such that $\sigma=F_1|_G$. In particular, $f/e$ is a $p$-power.

For part (a), as above let $\bG^*$ be dual to $\bG$ and $G^*:=\bG^{*F}$. Let
$d$ be maximal such that the cyclotomic polynomial $\Phi_d$ divides the order
polynomial of $\bG^*$ (see \cite[Tab.~24.1]{MT}) and let $l$ be a primitive
prime divisor of $\Phi_{dp}(p^e)$.
Note that $d\ge2$ by loc.~cit.~and hence such a divisor always exists unless
$p=2$, $d=3$, $e=1$ (see \cite[Thm~28.3]{MT}). In the latter case we instead
choose $l=5$. Then there exists a (semisimple) element
$s\in[\bG^{*F_1^p},\bG^{*F_1^p}]$ of order~$l$. If the $\bG^*$-conjugacy class
$C$ of $s$ were $F_1$-stable, by the Lang--Steinberg theorem
\cite[Thm~21.11]{MT} there would exist $F_1$-stable elements in $C$, which is
not the case since by construction $l$ does not divide the order of
$\bG^{*F_1}$. Now an application of Lemma~\ref{lem:s in dual}(1) shows the
existence of a character of $A$ above $\chi$ in $B_0(A)$ with height~1,
proving~(a).   \par
For~(b) let $U=\bU^F$, considered as a Sylow $p$-subgroup of $S$, which is
possible since $Z(G)$ has order prime to $p$. Now by~\cite[Lemma~7]{Hw73}, we
have
$[U,U]=\prod_{\al\in\Phi^+\setminus\Delta}U_\wal$ unless
$$G\in\{B_n(2),\,C_n(2),\,G_2(3),\,F_4(2),\,\tw2B_2(2),\,{}^2G_2(3),\,
  \tw2F_4(2)\}.$$
Since none of the excluded groups does possess non-trivial field automorphisms
we may in fact assume $[U,U]=\prod_{\al\in\Phi^+\setminus\Delta}U_\wal$. Note
that $U$ is $\sigma$-stable. Now
any $c\in k^{F_1^p}\setminus k^{F_1}$ lies in an $F_1$-orbit of
length~$p$. So for any $\al\in\Delta$, the element
$\prod_{\al\in\wal}x_\al(c)[U,U]\in U_\wal[U,U]$ lies in a $\sigma$-orbit of
length~$p$ in $U/[U,U]$. Thus, by Brauer's permutation lemma, there also is a
$\sigma$-orbit of length~$p$ on $\Irr(U/[U,U])$, hence on $\Irr(U)$. That is,
there is a linear character of $U$ whose inertia group in
$\langle U,\sigma\rangle\in\Syl_p(A)$ has index~$p$. So, by Clifford theory,
$P$ has an irreducible character of degree~$p$, showing (b).   \par
Finally, if $S=G={}^2G_2(q^2)$ is a small Ree group and $p=3$, then for any
semisimple element $s\in G^{\sigma^3}\setminus G^\sigma$ the corresponding
semisimple character in $\cE(G,s)$ is as in Lemma~\ref{lem:s in dual}(1),
giving~(a). Further, using that the
description in \cite[\S23]{MT} also applies in this very twisted case, as before
we can construct a linear character $\chi$ of $P\cap S$ stable under $\sigma^3$
but not under~$\sigma$, and then any character of $P$ above $\chi$ is as claimed
in~(b).
\end{proof}

\begin{prop}   \label{prop:defchar}
 Theorem~{\rm\ref{thm:almost}} holds for $A$ with socle a simple group of Lie
 type in characteristic~$p$.
\end{prop}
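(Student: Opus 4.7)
The strategy is to produce the required height-$1$ character of $B_0(A)$ by applying Lemma~\ref{lem:B0}(1): we will find $\chi_0\in\Irr(B_0(S))$ of $p'$-degree whose $A$-stabiliser has index exactly~$p$ in $A$. Since $A=\bO^{p'}(A)>S$, the quotient $A/S$ is a non-trivial $p$-group, and because the group of diagonal outer automorphisms of $S$ has order prime to $p$ in defining characteristic, we may identify $A/S$ with a $p$-subgroup of the subgroup of $\Out(S)$ generated by the field and graph automorphisms.

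Suppose first that $A$ contains a non-trivial field automorphism of $p$-power order. Pick an element $\sigma$ of order~$p$ in the field-automorphism part of $A/S$ which does not lie in the Frattini subgroup of $A/S$, so that there is a subgroup $M$ of $A$ with $S\le M$, $|A:M|=p$, and $\sigma\notin M$. Writing $\sigma=F_1|_S$ for a suitable power $F_1=F_p^e$ of the standard Frobenius, choose a semisimple element $s\in[\bG^{*F_1^p},\bG^{*F_1^p}]$ of a primitive prime order~$l$ as in Proposition~\ref{prop:field}, with the exceptional case $p=2$, $d=3$, $e=1$ handled via $l=5$. By the Lang--Steinberg theorem the $G^*$-class of $s$ is not $\sigma^*$-stable, and by choosing $s$ inside the fixed points in $\bG^*$ of the dual action of $M^*$ we arrange simultaneously that its class is $M^*$-stable. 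Then the $A^*$-orbit of the class has length exactly~$p$, so by~\cite{Hum} the semisimple character $\chi_s\in\cE(G,s)$ lies in $\Irr(B_0(S))$, and it satisfies the hypothesis of Lemma~\ref{lem:B0}(1).

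It remains to handle the case where the image of $A/S$ in $\Out(S)$ consists purely of graph automorphisms, which forces $p=2$ with $S$ of untwisted type $A_n$ ($n\ge 2$), $D_n$, or $E_6$, or $p=3$ with $S$ of type $D_4$ (triality). In each case we exhibit a semisimple character $\chi_s\in\Irr(B_0(S))$ of $p'$-degree whose $G^*$-class is moved by the graph $p$-element but is stabilised by a chosen maximal subgroup of $A/S$ of index~$p$: for instance, for the inverse-transpose graph involution on $\PSL_n(q)$ with $q$ a $2$-power we take $s$ of a suitable primitive prime order whose eigenvalue multiset is not invariant under inversion, with analogous direct constructions for $D_n$, $E_6$, and triality on $D_4$. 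The main obstacle is guaranteeing that the $A^*$-orbit of the class of~$s$ has length exactly~$p$ (not a higher $p$-power) uniformly across the structure of $A/S$, particularly when $A/S$ mixes field and graph components, and in the $\fS_3$-triality situation at $p=3$ for $D_4$.
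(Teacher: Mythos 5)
Your overall strategy (semisimple characters attached to primitive-prime-divisor elements, fed into Lemma~\ref{lem:B0} via Lemma~\ref{lem:s in dual}) is the same as the paper's, but the proposal has genuine gaps. The opening reduction is false: $A=\bO^{p'}(A)$ does not make $A/S$ a $p$-group; it only excludes $p'$-\emph{quotients}, so diagonal outer automorphisms of $p'$-order can occur in $A/S$. For example $A/S\cong\fS_3$ (diagonal extended by graph) for $S=\PSL_3(2^f)$ with $p=2$, or $A/S\cong\fA_4$ for $S=D_4(3^f)$ with $p=3$, and in both cases $\bO^{p'}(A)=A$. The paper never makes this reduction: for $p\ge5$ it observes that the $p$-part of $\Out(S)$ is central (the cyclic field part), which really does force $A/S$ to be cyclic generated by a field automorphism, so Proposition~\ref{prop:field} applies; for $p\in\{2,3\}$ it works with the stabiliser of the class of $s$ inside the full group $A$. (A smaller but related slip: an order-$p$ field automorphism outside the Frattini subgroup of $A/S$ need not exist, e.g.\ when the field part of $A/S$ is cyclic of order $p^2$.)

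More seriously, your Case~1 device --- choosing $s$ of primitive prime order $l$ ``inside the fixed points of the dual action of $M^*$'' so that its class is $M$-stable but not $\sigma$-stable --- cannot work when $M$ contains graph automorphisms: the fixed-point subgroup of a graph automorphism typically has order prime to $l$, so no such $s$ exists. For $D_4$ at $p=3$ the triality-fixed subgroup is $G_2(3^f)$, whose order is divisible by no primitive prime divisor of $\Phi_4$; the paper uses exactly this fact, but in the opposite direction: it chooses $s$ invariant under \emph{all} $p$-power field automorphisms in $A$ (its order a primitive prime divisor of $\Phi_d(p^{f/p^e})$, with $p^e$ the exponent of the field part of $A/S$) and \emph{moved} by the graph automorphism, so that the orbit of its class has length exactly $p$ because graph automorphisms have order $2$ (resp.\ $3$ for triality). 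Your closing admission --- that you cannot guarantee the orbit length is exactly $p$ when $A/S$ mixes field and graph parts, or in the triality situation --- concedes precisely the core of the proof: after reducing via Proposition~\ref{prop:field} to the case where $A$ induces some non-field $p$-automorphism, the paper supplies explicit constructions for $A_n$ and $E_6$ at $p=2$ (non-real $s$ inverted by the graph automorphism), for $D_n$ at $p=2$ ($s$ in a $\GL_n$-Levi, so the graph automorphism either inverts $s$ or swaps the two classes of such Levis), and for $D_4$ at $p=3$ (the $\Phi_4$ versus $G_2$ argument above). As written, your argument does not close that gap, so the proof is incomplete.
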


\begin{proof}
The outer automorphisms of $p$-power order of simple groups of Lie type $S$ in
characteristic~$p$ are field, graph and graph-field automorphisms (see e.g.
\cite[Thm~24.24]{MT}). In particular, for $p\ge5$ only field automorphisms
occur, and these are central in $\Out(S)$. Hence in this case, our claim
follows from Proposition~\ref{prop:field}.   \par
If $p=3$ the only groups $S$ with additional $p$-automorphisms are those of
untwisted type $D_4$, for which $\Out(S)\cong\fS_4\times C_f$.
Thus $A/S\le \fA_4\times C_{3^k}$ with $3^k|f$. By Proposition~\ref{prop:field}
we may assume that $A$ induces some non-field automorphisms of 3-power order.
Let $\bG$ be simply connected and $\bG^*$, $F$, $G=\bG^F$ be as above, so that
$S=G/Z(G)$. Let $3^e$ be
the exponent of a Sylow 3-subgroup of $A/S$. Let $s\in\bG^{*F}$ be a
(semisimple) element whose order is a primitive prime divisor $l$ of
$\Phi_4(3^{f/3^e})$ (so that in fact $s\in[\bG^{*F},\bG^{*F}]$). Then $l$ does
not divide the order of $G_2(3^f)$, the centraliser of a graph automorphism
of~$S$, and thus the $G^*$-conjugacy class of $s$ is not stabilised by the
graph automorphism of $S$. On the other hand, by construction $s$ is fixed by
the field automorphisms in $A/S$. Hence again we may conclude with
Lemma~\ref{lem:s in dual}.   \par
Finally, assume $p=2$. The only groups with graph automorphisms of order two
are those of types $A_n$ ($n\ge2$), $D_n$ ($n\ge4$) and $E_6$.
By the above we may assume $A$ induces some non-field automorphisms of 2-power
order. Let $2^e$ be the exponent of a Sylow 2-subgroup of $A/S$ and $\bG$, $F$
and $\bG^*$ as before with $G:=\bG^F=E_6(2^f)$ and set $F_1:=F_p^{f/2^e}$. Let
$s\in\bG^{*F_1}$ be a (semisimple) element whose order is a primitive
prime divisor of $\Phi_9(2^{f/2^e})$. Then $s$ is stable under all 2-power
order field automorphisms in $A$ but inverted by a graph automorphism (while
it is non-real in $G^*$). Thus the class of $s$ lies in an orbit of length~2
under $A$ and we may apply Lemma~\ref{lem:s in dual}. Next, let $G=\SL_n(2^f)$
with $n\ge3$ 
and $s\in\bG^{*F_1}$ be a (semisimple) element whose order is a primitive
prime divisor of $\Phi_n(2^{f/2^e})$ if $n$ is odd, respectively of
$\Phi_{n-1}(2^{f/2^e})$ if $n$ is even. Then $s$ is non-real, but inverted by
a graph automorphism and we may conclude as in the previous case.

Next, assume $G=D_n(q)$, with $q=2^f$ and $n\ge4$. Here, $\bG^{*F}\cong
\SO_{2n}^+(q)$. We now argue as in the proof of \cite[Prop.~2.8]{MN23}.
Let $s\in\bG^{*F_1}$ be a (semisimple) element
in the stabiliser $\GL_n(q)$ of a maximal totally isotropic subspace of
$\bG^{*F}$ whose order is a primitive prime divisor of $\Phi_n(2^{f/2^e})$.
If $n$ is odd, $s$ is non-real but the graph automorphism of $S$ induces the
transpose-inverse automorphism of $\GL_n(q)$, so conjugates $s$ to its inverse.
If $n$ is even, then $s$ is real but the graph automorphism interchanges the two
conjugacy classes of stabilisers of totally singular subspaces, hence does not
fix the class of~$s$. So in either case the semisimple character $\chi$ of $S$
labelled by~$s$ is invariant under $F_1$ but not under the graph automorphism,
so gives rise to a character in $\Irr(B_0(A))$ of height~1.
\par
Finally, let $S=\Sp_4(2^f)$ or $S=F_4(2^f)$ with $A/S$ inducing (and hence
being generated by) a graph-field automorphism $\sigma$. Set $a=f_{2'}$ and let
$s$ be a regular semisimple element in the first factor of an
$A_1(2^a)\times\tilde A_1(2^a)$-, respectively
$A_2(2^a)\times\tilde A_2(2^a)$-subsystem subfield subgroup. Then $s$ is
centralised by 2-power order field automorphisms, but not by $\sigma$, since
$s$ centralises a long root element, but no short root element, and these two
are interchanged by $\sigma$. Then the semisimple character $\chi_s$ is
again as desired.
\end{proof}

Next, we consider groups of Lie type in characteristic $r$ different from $p$.

\begin{prop}   \label{prop:crosschar-non}
 Theorem~{\rm\ref{thm:almost}} holds for $A$ with socle $S$ simple of Lie type
 in characteristic~$r\ne p$ with non-abelian Sylow $p$-subgroups.
\end{prop}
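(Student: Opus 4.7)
The plan is to mimic the blueprint of Propositions~\ref{prop:field} and~\ref{prop:defchar}, adapted to cross characteristic, with Lemma~\ref{lem:B0} again doing the bookkeeping between $B_0(S)$ and $B_0(A)$. Since $A=\bO^{p'}(A)$ and $S<A$, the quotient $A/S$ has order divisible by $p$; for $r\ne p$ this $p$-part must come from diagonal, field, or graph (possibly graph-field) automorphisms of $S$, and I would organise the argument by which of these produces the $p$-contribution.

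First I would separate the case in which $\Syl_p(S)$ is already non-abelian — where Sylow-structure invariants of $S$ give direct access to characters of $B_0(S)$ of small positive height that one tries to arrange to be $A$-invariant and extendible to a Sylow $p$-subgroup of $A_\chi$, for Lemma~\ref{lem:B0}(2) — from the complementary case, where $\Syl_p(S)$ is abelian but the adjoined outer $p$-automorphisms inflate it to a non-abelian Sylow subgroup of $A$. In the second case the structure of $A/S$ is highly constrained (the outer $p$-element must act non-trivially on a Sylow $\Phi_d$-torus of $S$), which narrows the possibilities for $S$ and $p$ substantially.

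The main technical step, parallel to Lemma~\ref{lem:s in dual}, is to exhibit a suitable semisimple element $s\in[G^*,G^*]$ such that the semisimple character $\chi_s\in\cE(G,s)$ lies in $\Irr(B_0(G))$ with $\chi_s(1)$ coprime to $p$, while its $G^*$-conjugacy class is stabilised by a $p$-complement of $A^*/S^*$ but moved by an element of order~$p$, giving $|A:A_{\chi_s}|_p = p$. Placing $\chi_s$ in the principal block in cross characteristic requires the Broué--Michel description of $\ell$-blocks via $\ell'$-series $\cE_\ell(G,t)$ — typically achieved by taking $s$ to be a $p$-element whose centralizer contains a Sylow $p$-subgroup of $G^*$ — and then Lemma~\ref{lem:B0}(1) directly furnishes a character of $B_0(A)$ of height~$1$.

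The main obstacle will be uniformity across families. One must run the construction through classical types, exceptional types, and the very twisted Suzuki and Ree groups, and across the three flavours of outer $p$-automorphism, locating primitive prime divisors of cyclotomic polynomials $\Phi_d(q^{1/p^e})$ and applying Tay's results on $\Aut(S)$-stabilisers of semisimple classes to certify the orbit-length condition. As at the end of Proposition~\ref{prop:field}, a handful of small groups — those where the required Zsigmondy primes fail, or where diagonal automorphisms are essential (for instance $\PSL_n(q)$ with $p\mid\gcd(n,q-1)$, or $\PSU_n(q)$ analogues) — will require bespoke treatment by direct inspection of character tables.
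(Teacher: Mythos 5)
Your toolkit (Lusztig series, Lemma~\ref{lem:B0}, primitive prime divisors) is the right one, and for the subcase where the $p$-part of $A/S$ comes from field automorphisms your plan is essentially what the paper does in its leftover cases (choose a $p$-element $s$ fixed by $F_1^p$ but not by $F_1$). But your central mechanism has a structural failure: you propose to force $|A:A_{\chi_s}|_p=p$ by exhibiting a semisimple class that is \emph{moved} by a $p$-element of $A/S$, and in cross characteristic the $p$-part of $A/S$ may be generated entirely by diagonal automorphisms, which stabilise every rational Lusztig series $\cE(G,s)$ (they are induced by conjugation inside a group with connected centre containing $G$). So no choice of $s$ yields the required orbit of length divisible by $p$ in those cases. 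These are not ``a handful of small groups'' to be checked in character tables: they include infinite families satisfying the hypotheses, e.g.\ $A=\PGL_2(q)$ with $q\equiv\pm1\pmod 8$ at $p=2$, and groups between $\PSL_3(q)$ and $\PGL_3(q)$ with $q\equiv1\pmod 9$ at $p=3$ (likewise for $\PSL_6$, $\PSU_3$, $\PSU_6$). A second, related imprecision: Brou\'e--Michel only places a series labelled by a $p$-element inside $\cE_p(G,1)$, which is a \emph{union} of blocks; pinning the character to $B_0(G)$ needs extra input (the paper uses \cite[Thm~B]{En00} or uniqueness of the unipotent $p$-block in the relevant types).

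The paper's route for the bulk of the proposition is different and you would need its key ingredient: by \cite{BM15} the group $S$ has a \emph{unipotent} character of height~1 in $B_0(S)$ outside a short explicit list of types; such characters are invariant under all automorphisms \cite[Thm~4.5.11]{GM20} and extend to their inertia groups \cite[Thm~2.4]{Ma08}, so Lemma~\ref{lem:B0}(2) applies regardless of whether the outer $p$-part is diagonal, field or graph. Your sketch only asserts that ``Sylow-structure invariants give direct access'' to suitable invariant characters, which is the statement to be proved, not an argument. For the types excluded in \cite{BM15} where diagonal automorphisms carry the $p$-part, the needed trick is Clifford-theoretic rather than class-moving: one takes a semisimple character of the adjoint group ($\PGL_3(q)$, $\PGL_6(q)$, etc.) of height~1 in its principal block whose restriction to $S$ splits into $p$ height-zero constituents permuted by the diagonal automorphism, so a constituent has stabiliser of index~$p$ in $A$ and Lemma~\ref{lem:B0}(1) applies. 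Without an argument of this kind your proposal does not cover the diagonal-automorphism cases, which is where the real content of this proposition lies. (A minor point: the subcase you describe with $\Syl_p(S)$ abelian belongs to the companion statement, Proposition~\ref{prop:crosschar-ab}, not here.)
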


\begin{proof}
Let $G=\bG^F$ with $\bG$ simple of simply connected type such that $S=G/Z(G)$.
For $p=2$, \cite[Prop.~4.5]{BM15} shows the existence of a unipotent character
of $S$ of $2$-height~1 in $B_0(G)$, except for $A_1(q)$ and $A_2(\eps q)$ with
$q\equiv-\eps\pmod4$. For $p\ge3$ the existence of a unipotent character of
height~1 in $B_0(G)$ is shown in \cite[Prop.~4.3 and Thm~4.7]{BM15}, except for
$G_2(q)$, $\tw3D_4(q)$, $\tw2F_4(q^2)$, $A_2(\eps q)$ and $A_5(\eps q)$ with
$q\equiv\eps\pmod3$ when $p=3$. Since unipotent characters have $Z(G)$ in their
kernel, all of the above can be regarded as characters in $B_0(S)$.
Furthermore, using \cite[Thm~4.5.11]{GM20} one sees that these unipotent
characters are invariant under $p$-automorphisms of $S$ and since by
\cite[Thm~2.4]{Ma08} they extend to their inertia group in $A$, this provides
the desired character of $A$ by Clifford theory and Lemma~\ref{lem:B0}.
\par
It remains to discuss the types left open above. First assume $p=3$.
For $S=G=G_2(r^f)$ let $s\in H:=\bG^{*F_r}=G_2(r)$ be a 3-element in a maximal
torus $T$ of $H$ of order $(r-\eps)^2$, where $\eps\in\{\pm1\}$ and
$r\equiv\eps\pmod3$, whose centraliser in a Sylow 3-subgroup of $H$ has index~3
(since $N_H(T)$ contains a Sylow 3-subgroup of $H$ and the latter is
non-abelian). Then $s$ is invariant under
all field automorphisms of $S$, and any semisimple character in $\cE(G,s)$ has
height~1 (by the degree formula \cite[Thm~2.6.11 with Cor.~2.6.18]{GM20}), lies
in the principal 3-block of $G$ by \cite[Thm~B]{En00} and is invariant under
field automorphisms (since $s$ is), so extends to $A$ as $A/S$ is cyclic.
Exactly the same construction allows one to deal with all of the other excluded
types for $p=3$ if $A$ induces only field automorphisms.   \par
So next let $S=\PSL_3(q)$ with $q\equiv1\pmod3$ and assume $A$ induces some
non-field automorphisms. Let $3^e$ be the exponent of a Sylow 3-subgroup of
$A/S$ and set $F_1:=F_r^{f/3^e}$.
Let now $\bG$ be simple of adjoint type such that $G:=\bG^F=\PGL_3(q)$, and
so $S=[G,G]$. Let $s=\diag(1,\om,\om^2)\in\bG^{*F}=\SL_3(q)$ with
$\om\in\FF_q^\times$ a primitive third root of unity. Thus $s$ is a semisimple
element of order three. Since $r^{f/3^e}\equiv r^f=q\equiv1\pmod3$ this is
$F_1$-stable. Let $\chi\in\cE(G,s)$ be the semisimple character. Then $\chi$
lies in $B_0(G)$ since $G$ has a unique unipotent 3-block. Furthermore, $\chi$
has height~1, while its restriction to $[G,G]=S$ splits into three characters
$\chi_1,\chi_2,\chi_3$ of height~0, since the image of $s$ in
$\bG^*/Z(\bG^*)=\PGL_3$ has disconnected centraliser. Since $s$ is
$F_1$-invariant, the inertia group of $\chi_1$ in $A$ has index~3, yielding the
claimed character of $A$ by Lemma~\ref{lem:s in dual}(1). For $S=\PSL_6(q)$ let
$s=\diag(1,1,1,1,\om,\om^2)\in\SL_6(q)$, an $F_1$-invariant
semisimple element of order three. Let $\chi\in\cE(\PGL_6(q),s)$ be the
corresponding semisimple character. Then $\chi$ lies in the principal 3-block,
since there is a unique unipotent block, has height~1 and is invariant under
field automorphisms. So we may conclude as in the previous case. The unitary
groups $\PSU_3(q)$ and $\PSU_6(q)$ are handled entirely similarly.
\par
Finally let $p=2$ and $S$ one of the groups excluded in the first paragraph.
If $S=\PSL_3(\eps q)$ with $q\equiv-\eps\pmod4$ let $s$ be an element of
order~4 in a maximal torus of $\PGL_3(\eps q)$ of order $q^2-1$. The
corresponding semisimple character of $S$ has degree $q^3-\eps$, hence
height~1, lies in the principal 2-block and is $A$-invariant as $s$ is
centralised by the field and graph automorphisms. Now note that $A/S$ is cyclic
since $q$ is not a square when $\eps=1$, so $\chi$ extends to a character
of~$A$ as required.
\par
Now let $S=\PSL_2(q)$. Since by assumption Sylow 2-subgroups of $S$ are
non-abelian we have
$q\equiv\eps\pmod8$ with $\eps\in\{\pm1\}$. Let $s\in\SL_2(q)$ be an element of
order~4 whose image in $\PGL_2$ has disconnected centraliser. The (semisimple)
character $\chi\in\cE(\PGL_2(q),s)$ has degree~$q+\eps$, hence height~1, lies in
the principal 2-block (being labelled by a 2-element) and is invariant under
the field automorphisms of $S$. Thus we are done whenever $A$ induces some
non-field automorphism. If $A$ induces only field automorphisms, then $q$ is
a square, so there exists a (rational) element of order~4 in a maximal torus
of order $q-1$ of $\PGL_2(q)$ which is a square, so lies in $\PSL_2(q)$. So
the corresponding semisimple character of $\SL_2(q)$ of degree $q+1$ has
$Z(\SL_2(q))$ in its kernel, lies in the principal 2-block, is of height~1
and is invariant under all field automorphisms. It thus extends to a character
of $A$ as desired.
\end{proof}

\begin{prop}   \label{prop:crosschar-ab}
 Theorem~{\rm\ref{thm:almost}} holds for $A$ with socle $S$ simple of Lie type
 in characteristic~$r\ne p$ with abelian Sylow $p$-subgroups.
\end{prop}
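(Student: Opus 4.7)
The plan is to apply Lemma~\ref{lem:B0}(1) by producing a character $\chi\in\Irr(B_0(S))$ of $p'$-degree whose $A$-stabiliser has index exactly~$p$; any such $\chi$ yields a character of height~$1$ in $\Irr(B_0(A))$. Since $S<A$, $A=O^{p'}(A)$, and a Sylow $p$-subgroup of $A$ is non-abelian while $Q\in\Syl_p(S)$ is abelian, the quotient $A/S$ is a non-trivial $p$-group acting non-trivially on $Q$. Because $Q$ is abelian, Brauer's Height Zero Conjecture guarantees that every character in $\Irr(B_0(S))$ already has $p'$-degree, so only the orbit condition is at stake.

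Let $\bG$ be of simply connected type with $G=\bG^F$ and $S=G/Z(G)$, and let $\bG^*$, $G^*=\bG^{*F}$ be dual. Fix a Sylow $p$-subgroup $Q^*$ of $G^*$, abelian and contained in a maximal torus $T^*=(\bT^*)^F$, and lift the $A$-action to $A^*$ acting on $G^*$ in the sense of~\cite{Tay}. The core step is to exhibit a semisimple $p$-element $s\in Q^*\cap[G^*,G^*]$ whose $G^*$-conjugacy class has $A^*$-stabiliser of index exactly~$p$. Given such $s$, the semisimple character $\chi_s\in\cE(G,s)$ has $p'$-degree by the degree formula \cite[Thm~2.6.11]{GM20} (since $C_{G^*}(s)\supseteq Q^*$ forces $|G^*|_p=|C_{G^*}(s)|_p$), is trivial on $Z(G)$ by \cite[Lemma~4.4(ii)]{NT13} (since $s\in[G^*,G^*]$), and lies in $B_0(G)$ as $s$ is a $p$-element with $C_{G^*}(s)$ of full $p$-defect; one then descends to $\Irr(B_0(S))$ exactly as in Lemma~\ref{lem:s in dual}. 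Applying Lemma~\ref{lem:B0}(1) to this restriction concludes.

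The main obstacle is ensuring that the $G^*$-\emph{class} of $s$, and not merely $s$ itself, is moved by $A^*$: two elements of $Q^*$ may be $G^*$-conjugate via the relative Weyl group $N_{G^*}(T^*)/T^*$, so moving $s$ within $Q^*$ is not automatic. The strategy is a case analysis according to the type of outer automorphism of $p$-power order appearing in $A/S$. For pure field automorphisms---the generic situation---the action on $Q^*$ is by raising to a suitable power related to the order of the field automorphism, and one picks $s$ whose image is not Weyl-conjugate to $s$, in the spirit of Proposition~\ref{prop:field}. For graph (or graph-field) automorphisms---relevant in types $A_n$, $D_n$, $E_6$ for $p=2$, and the triality in $D_4$ for $p=3$---one uses that the graph action permutes Weyl orbits on $Q^*$ non-trivially; here the constructions from Propositions~\ref{prop:defchar} and~\ref{prop:crosschar-non} adapt, now with a $p$-torus in $G^*$ in place of a $\Phi_d$-torus for some $d$ prime to~$p$.

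The hard part will be the short list of small-rank or small-$q$ configurations (for instance when $Q^*$ is too small to accommodate a $W$-non-conjugate pair, or when the graph automorphism happens to fix every $W$-orbit on~$Q^*$). These exceptional cases will need to be settled by direct inspection in the almost simple group, using the character tables in the Atlas or in \texttt{CHEVIE}, paralleling the exception handling in the proofs of Propositions~\ref{prop:field}--\ref{prop:crosschar-non}.
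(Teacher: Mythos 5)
There is a genuine gap: your case analysis (field automorphisms versus graph automorphisms) omits diagonal automorphisms, and under the abelian-Sylow hypothesis these are precisely the cases that actually occur for $p=2$ and $p=3$. Indeed, by Walter's classification, abelian Sylow $2$-subgroups force $S=\PSL_2(q)$ with $q\equiv\pm3\pmod 8$; then $f$ is odd, there are no field automorphisms of $2$-power order, and $A=\PGL_2(q)$, so the only relevant outer automorphism is diagonal. Similarly for $p=3$ the relevant groups are $\PSL_3(q)$, $q\equiv4,7\pmod 9$, and $\PSU_3(q)$, $q\equiv2,5\pmod 9$, with a diagonal outer automorphism of order $3$. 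For diagonal automorphisms your central device cannot work: inner-diagonal automorphisms stabilise every $G^*$-class and every rational Lusztig series, so no semisimple $p$-element $s$ with $A^*$-moved class exists. What is needed instead is the mechanism the paper uses in Proposition~\ref{prop:crosschar-non}: a semisimple character of the adjoint group whose restriction to $S$ splits (disconnected centraliser of $s$ in the adjoint dual) into constituents permuted by the diagonal automorphism, or, for $p=2$, simply the character of $A=\PGL_2(q)$ of degree $q+\eps$ labelled by an element of order~$4$, which already has height~$1$ in $B_0(A)$. Conversely, the graph-automorphism configurations you flag as the hard part ($A_n$, $D_n$, $E_6$ at $p=2$, triality in $D_4$ at $p=3$) are vacuous here, since those groups have non-abelian Sylow $p$-subgroups; the missing first step in your plan is exactly this classification of which $(S,p)$ can occur, which is what makes the paper's proof short.

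For $p\ge5$ your outline does match the paper: only field automorphisms arise, and one takes a $p$-element $s\in G^*$ fixed by $F_1^p$ but not by $F_1$, with the semisimple character in $\cE(G,s)$ of height~$0$ lying in $B_0(G)$ (the paper cites Enguehard's \cite[Thm~B]{En00} for block membership, and notes $p\nmid|G^*:[G^*,G^*]|$ to descend to $S$), concluding via Lemma~\ref{lem:B0}. But even there you defer the decisive point --- that the class, not merely the element, is moved --- as ``the main obstacle'' without resolving it, and you propose to settle residual cases by Atlas/CHEVIE inspection, which cannot work since the exceptional configurations above are infinite families. As written, the proposal is a programme whose unresolved parts are exactly where the content of the proposition lies.
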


\begin{proof}
Note that $S={}^2G_2(q^2)$ has no even order outer automorphisms. Thus, if $p=2$
then Sylow 2-subgroups of $S$ being abelian forces $S=\PSL_2(q)$ with
$q\equiv\eps3\pmod8$, $\eps\in\{\pm1\}$. In particular, $S$ has no 2-power order
field automorphisms and hence $A=\PGL_2(q)$. Here we take the irreducible
character $\chi$ of $A$ labelled by an element of order~4, of degree~$q+\eps$,
which has height~1 and lies in the principal 2-block. So we have $p>2$.
\par
Let $p=3$. For the groups $S=\PSL_3(\eps q)$ with abelian Sylow 3-subgroup and
outer automorphisms of order~3 the arguments in the proof of
Proposition~\ref{prop:crosschar-non} still do apply. The only other cases
with abelian Sylow 3-subgroups are when $S=\PSL_2(q)$, with $q$ not a 3-power
by Proposition~\ref{prop:defchar}. If $A$ has non-abelian Sylow 3-subgroups
then $q=r^f$ with $3|f$. Write $|A/S|=e$ and let $s$ be a 3-element in
$\PGL_2(r^{3f/e})$ not lying in $\PGL_2(r^{f/e})$. Then the semisimple
character $\chi$ of $S$ labelled by~$s$ lies in an orbit of length~3 under $A$,
in the principal 3-block, and has height~0; hence there is a character
of~$A$ above $\chi$ as claimed.

For $p\ge5$, outer automorphisms of order $p$ are either field automorphisms
or $S=\PSL_n(\eps q)$ with $q\equiv\eps\pmod p$, but in the latter case, Sylow
$p$-subgroups of $S$ are non-abelian. Thus we have that $A$ induces $p$-power
field automorphisms of $S$, say generated by a Frobenius map $F_1$. Let $\bG$
be of simply connected type such that $S=G/Z(G)$ for $G=\bG^F$. Let $s\in G^*$
be a $p$-element whose class is invariant under $F_1^p$ but not under $F_1$ and
$\chi\in\cE(G,s)$ the corresponding semisimple character. Since Sylow
$p$-subgroups of $S$ and hence of $G$ are abelian, this has height~0. Then
$\chi\in\Irr(B_0(G))$ by \cite[Thm~B]{En00}. Note that $p$ does not divide
$|G^*:[G^*,G^*]|$, so $\chi$ is a character of $S$. By construction its inertia
group in $A$ has index~$p$, and thus we are done by Lemma~\ref{lem:B0}.
\end{proof}

This completes the proof of Theorem~\ref{thm:almost}. Given a group $G$ we
write $\cd(G)$ to denote the set of character degrees of $G$.

\begin{cor}   \label{cor:as}
 Let $p$ be a prime and let $A$ be an almost simple group with socle $S$ and
 $\bO^{p'}(A)=A$. Let $P\in\Syl_p(A)$ and assume that $\cd(P)=\{1,p^a\}$ for
 some $a\ge1$. Then there exists $\chi\in\Irr(B_0(A))$ such that
 $1<\chi(1)_p\leq p^a$.
\end{cor}

\begin{proof}
The EM-conjecture for principal blocks of simple groups was proved
in \cite[Thm~4.7]{BM15}, so we may assume $A$ is not simple. If $S$ is sporadic
then $|A:S|=2$, so $p=2$, and the result can be checked with \cite{gap}. If
$S$ is alternating then the claim was shown in \cite[Thm~2.1]{BM15}.
Finally, for $S$ simple of Lie type the result is contained in
Theorem~\ref{thm:almost}.
\end{proof}

\section{Proof of Theorem 1}   \label{sec:main}

We will use several times the description of the structure of groups with an
abelian Sylow $p$-subgroup.

\begin{thm}   \label{abel}
 Let $p$ be a prime. Let $G$ be a finite group with $G=\bO^{p'}(G)$ and abelian
 Sylow $p$-subgroups. If $N=\bO_{p'}(G)$, then $G/N=X/N\times Y/N$, where $X/N$
 is an abelian $p$-group and $Y/N$ is trivial or a direct product of non-abelian
 simple groups of order divisible by $p$. In particular, any non-abelian chief
 factor of order divisible by $p$ of a group with abelian Sylow $p$-subgroups
 is a simple group.
\end{thm}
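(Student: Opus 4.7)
The plan is to reduce to the case $N = \bO_{p'}(G) = 1$ (both hypotheses pass to $G/N$) and then to prove the stronger statement that $G = \bO_p(G) \times E$, where $\bO_p(G)$ is abelian central and $E$ is a direct product of non-abelian simple groups of order divisible by~$p$; the chief factor statement follows immediately.

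First I would show $\bO_p(G) \le \bZ(G)$: $\bO_p(G)$ lies in every Sylow $p$-subgroup, which is abelian, so each Sylow $p$-subgroup centralises $\bO_p(G)$; since $G = \bO^{p'}(G)$ is generated by its Sylow $p$-subgroups, $\bO_p(G) \le \bZ(G)$. Passing to $\tilde G := G/\bO_p(G)$, a Schur--Zassenhaus argument using centrality of $\bO_p(G)$ shows that the preimage in~$G$ of any normal $p'$-subgroup of $\tilde G$ splits as $\bO_p(G) \times K$ with $K$ a characteristic $p'$-subgroup normal in~$G$; by $\bO_{p'}(G) = 1$, $K = 1$ and $\bO_{p'}(\tilde G) = 1$. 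A similar lifting argument gives $\bO_p(\tilde G) = 1$. Hence the Fitting subgroup of $\tilde G$ is trivial and its generalised Fitting subgroup equals the layer $E(\tilde G) = L_1 \cdots L_k$, a central product of components. For each component $L_i$, the subgroup $\bZ(L_i) \le \bZ(E(\tilde G))$ is a characteristic abelian normal subgroup of~$\tilde G$, hence lies in the trivial Fitting subgroup; so each $L_i$ is simple, of order divisible by~$p$, with abelian Sylow $p$-subgroup, and $E(\tilde G) = L_1 \times \cdots \times L_k$.

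It remains to show $\tilde G = E(\tilde G)$ and that the extension $1 \to \bO_p(G) \to G \to \tilde G \to 1$ splits. Since $\bZ(E(\tilde G)) = 1$, $\tilde G$ embeds in $\Aut(E(\tilde G))$. A $p$-element of $\tilde G$ permuting the $L_i$ with a cycle of length $>1$ would fail to commute with an order-$p$ element inside some permuted $L_j$, producing a non-abelian subgroup; thus an abelian Sylow $p$-subgroup of~$\tilde G$ must stabilise each~$L_i$, and hence $\tilde G$ itself (being generated by Sylow $p$-subgroups) embeds in $\prod_i \Aut(L_i)$. For each~$i$ the projection image $A_i$ is almost simple with socle $L_i$, has abelian Sylow $p$-subgroup, and satisfies $A_i = \bO^{p'}(A_i)$, so $A_i/L_i$ is a $p$-group inside the solvable group $\Out(L_i)$. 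The main obstacle is the CFSG-dependent assertion that no simple group with abelian Sylow $p$-subgroup admits a non-inner automorphism of $p$-power order centralising its Sylow $p$-subgroup; I would verify this by a case check (for Lie type, abelianness of Sylow in the defining characteristic forces rank~$1$, and in cross characteristic $p$-power field automorphisms act non-trivially on the relevant tori; the alternating and sporadic socles are immediate). Combined with the parallel classification-based fact that the Schur $p$-multiplier of a simple group with abelian Sylow $p$-subgroup is trivial---which ensures that $H^2(\prod_i L_i, \bO_p(G))$ vanishes, so the central extension $G \to \tilde G$ splits---one concludes $G = \bO_p(G) \times E$ with $E \cong E(\tilde G)$, as required.
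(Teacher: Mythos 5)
The paper's own ``proof'' is a one-line citation of \cite[Thm~2.1]{NT13}, so you are in effect trying to re-derive that structure theorem from scratch. Your reduction to $\bO_{p'}(G)=1$, the centrality of $\bO_p(G)$, the analysis of $\tilde G=G/\bO_p(G)$ via $\bF^*(\tilde G)=\bE(\tilde G)$, and the abelian-Sylow argument forcing the components to be normalized are all fine. But two steps are genuinely flawed. First, the claimed ``classification-based fact that the Schur $p$-multiplier of a simple group with abelian Sylow $p$-subgroup is trivial'' is false: $\fA_6$ with $p=3$ has elementary abelian Sylow $3$-subgroups of order $9$ but Schur multiplier $C_6$ (likewise $\PSL_3(4)$ with $p=3$). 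So $H^2(\prod_i L_i,\bO_p(G))$ need not vanish and your splitting step has no justification as written; indeed, your argument at this point never uses that $G$ itself (not just $\tilde G$) has abelian Sylow $p$-subgroups, and without that hypothesis the conclusion is false -- the perfect central extension $3\cdot \fA_6$ of $\fA_6$ by $C_3$ does not split. The correct and elementary repair uses abelianness of $P\in\Syl_p(G)$: transfer into the abelian group $P$ gives $P\cap \bZ(G)\cap G'=1$; since $G/\bO_p(G)$ is perfect one has $G=G'\bO_p(G)$ with $G'$ perfect, hence $G'\cap\bO_p(G)=1$ and $G=G'\times\bO_p(G)$.

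Second, the inference ``$A_i=\bO^{p'}(A_i)$, so $A_i/L_i$ is a $p$-group inside the solvable $\Out(L_i)$'' is a non sequitur: a solvable group can equal its own $\bO^{p'}$ without being a $p$-group (e.g.\ $\fS_3$ for $p=2$). This is repairable -- all you need is that if $\tilde G\neq \bE(\tilde G)$ then $p$ divides $|\tilde G/\bE(\tilde G)|$ (a nontrivial solvable group with no nontrivial $p'$-quotient has a nontrivial $p$-quotient), so some element of an abelian Sylow $p$-subgroup of $\tilde G$ lies outside $\bE(\tilde G)$ and induces a non-inner $p$-automorphism of some $L_i$ while centralizing the Sylow $p$-subgroup $\tilde P\cap L_i$ of $L_i$. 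That brings you to the assertion you yourself identify as the main obstacle: no simple group with abelian Sylow $p$-subgroups admits a non-inner $p$-automorphism centralizing a Sylow $p$-subgroup. This is exactly the hard, CFSG-dependent content that \cite[Thm~2.1]{NT13} packages, and your one-line sketch does not establish it (in cross characteristic you must also handle diagonal and graph automorphisms, not only field automorphisms). As it stands, then, the proposal contains one false auxiliary claim at the splitting step and leaves the essential classification input unproved; either cite the known structure theorem as the paper does, or supply the transfer argument above together with a genuine case analysis of the automorphism claim.
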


\begin{proof}
This follows from \cite[Thm~2.1]{NT13}.
\end{proof}

We need the following general lemma on principal blocks. It should be compared
with \cite[Lemmas~1.2 and~1.3]{rsv21}.

\begin{lem}   \label{quo}
 Let $G$ be a finite group and $N\unlhd G$. Let $Q\in\Syl_p(N)$ and suppose
 that $\bC_G(Q)\subseteq N$. Then $B_0(G)$ is the unique block of $G$ that
 covers $B_0(N)$. In particular $\Irr(G/N)\subseteq\Irr(B_0(G))$.
\end{lem}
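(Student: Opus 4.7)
The plan is to establish the uniqueness of the covering block first; the ``In particular'' statement will then follow readily. For the latter: any $\chi\in\Irr(G/N)$ satisfies $\chi|_N=\chi(1)\cdot 1_N\in\Irr(B_0(N))$, so the block of $G$ containing $\chi$ covers $B_0(N)$, and by uniqueness must equal $B_0(G)$. That $B_0(G)$ covers $B_0(N)$ is standard: $1_G\in\Irr(B_0(G))$ restricts to $1_N\in\Irr(B_0(N))$.

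For uniqueness I would use the Brauer homomorphism $\mathrm{Br}_Q\colon Z(\FF G)\to Z(\FF\bC_G(Q))$ (with $\FF$ a sufficiently large field of characteristic~$p$), which is a ring homomorphism. Because $B_0(N)$ is $G$-invariant, its block idempotent $e_{B_0(N)}$ is central in $\FF G$ and decomposes as $e_{B_0(N)}=\sum_B e_B$, where $B$ ranges over the blocks of $G$ covering $B_0(N)$. I would compute $\mathrm{Br}_Q(e_{B_0(N)})$ in two ways. On one hand, $\mathrm{Br}_Q$ depends only on the $\bC_G(Q)$-supported part of its argument; since $e_{B_0(N)}$ is supported in $N$ and the hypothesis gives $\bC_G(Q)=\bC_G(Q)\cap N=\bC_N(Q)$, computing inside $N$ yields $\mathrm{Br}_Q(e_{B_0(N)})=e_{B_0(\bC_N(Q))}$ by Brauer's third main theorem applied within~$N$. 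On the other hand, Brauer's third main theorem applied to $G$ gives $\mathrm{Br}_Q(e_{B_0(G)})=e_{B_0(\bC_G(Q))}$. The two images coincide because $\bC_G(Q)=\bC_N(Q)$.

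Consequently $\mathrm{Br}_Q\bigl(\sum_{B\neq B_0(G)}e_B\bigr)=0$, where the sum is over blocks of $G$ other than $B_0(G)$ that cover $B_0(N)$. Since $\mathrm{Br}_Q$ is a ring homomorphism it carries orthogonal idempotents to orthogonal idempotents, so a sum of pairwise orthogonal idempotents vanishing in $Z(\FF\bC_G(Q))$ forces each summand to vanish. However, for any such $B$, standard covered-block theory provides a defect group $D$ of $B$ with $D\cap N$ a defect group of $B_0(N)$, i.e., a Sylow $p$-subgroup of $N$; after $G$-conjugation $Q\leq D$, and then $\mathrm{Br}_Q(e_B)\neq 0$ by the standard criterion that $\mathrm{Br}_Q(e_B)$ is nonzero whenever $Q$ is subconjugate to some defect group of $B$. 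This contradiction completes the proof. The only delicate point I foresee is the identification $\mathrm{Br}_Q(e_{B_0(N)})=e_{B_0(\bC_N(Q))}$, which crucially uses the hypothesis $\bC_G(Q)\subseteq N$ to reduce a computation in $G$ to one inside~$N$; the remainder of the argument is formal.
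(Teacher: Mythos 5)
Your proof is correct, but it follows a genuinely different route from the paper's. The paper stays at the level of induced blocks: for any block $B$ of $G$ covering $B_0(N)$, Kn\"orr's theorem \cite[Thm~9.26]{N98} gives a defect group $P$ of $B$ with $Q\le P$; then $\bC_G(P)\subseteq\bC_G(Q)\subseteq N$ makes $B$ regular with respect to $N$ \cite[Lemma~9.20]{N98}, so $B_0(N)^G$ is defined and equals $B$ \cite[Thm~9.19]{N98}, and Brauer's Third Main Theorem forces $B=B_0(G)$; the ``in particular'' clause is quoted from \cite[Thm~9.2]{N98}. You instead argue with block idempotents and the Brauer homomorphism: you decompose the $G$-stable idempotent $e_{B_0(N)}$ into the idempotents of the covering blocks, compute $\mathrm{Br}_Q(e_{B_0(N)})=e_{B_0(\bC_G(Q))}=\mathrm{Br}_Q(e_{B_0(G)})$ -- this is exactly where $\bC_G(Q)\subseteq N$ enters, together with the third main theorem applied in $N$ and in $G$ -- and then exclude any further covering block $B$ because Kn\"orr's theorem places $Q$ (up to conjugacy) inside a defect group of $B$, whence $\mathrm{Br}_Q(e_B)\neq0$, contradicting the orthogonal-idempotent cancellation. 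All the ingredients you use are standard and the steps check out: the decomposition of $e_{B_0(N)}$ over the covering blocks, the identity $\mathrm{Br}_Q(e_B)=\sum_{b^G=B}e_b$ behind your two invocations of the third main theorem, and the criterion that $\mathrm{Br}_Q(e_B)\neq0$ whenever $Q$ lies in a defect group of $B$. Your version buys a proof that bypasses regularity of blocks and the induced block $B_0(N)^G$ entirely and makes the role of the hypothesis transparent (it is precisely what lets the truncation of $e_{B_0(N)}$ be computed inside $N$); the paper's version buys brevity, being a short chain of results quotable from \cite{N98}. Both ultimately rest on the same two pillars, Kn\"orr's theorem and Brauer's Third Main Theorem.
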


\begin{proof}
Let $B$ be a $p$-block of $G$ covering $B_0(N)$. By Kn\"orr's theorem
\cite[Thm~9.26]{N98}, there exists a defect group $P$ of $B$ containing $Q$.
Since $\bC_G(P)\subseteq\bC_G(Q)\subseteq N$, \cite[Lemma~9.20]{N98} implies
that $B$ is regular with respect to $N$. Now, by \cite[Thm~9.19]{N98},
$B_0(N)^G$ is defined and $B_0(N)^G=B$.

Now, Brauer's Third Main Theorem \cite[Thm~6.7]{N98} implies that $B=B_0(G)$,
as desired. The second assertion follows from \cite[Thm~9.2]{N98}.
\end{proof}

In Step 1 of the proof of Theorem 1 we handle the solvable case. When $p$ is
odd, that case follows from \cite[Thm 6.4 and Prop.~6.5]{EM14}. We extend this
result to the prime~$2$.  We will need orbit theorems of Espuelas and Isaacs
that we recall next. While Espuelas' result shows the existence of regular
orbits in a certain situation, Isaacs' result provides short orbits in a
similar situation.

\begin{thm}   \label{espm}
 Let $p$ be a prime and let $G$ be a solvable group with $\bO_p(G)=1$. Let $A$
 be an abelian $p$-subgroup of $G$. Suppose that $V$ is a faithful $G$-module
 in characteristic $p$. Then there exists $v\in V$ in a regular $A$-orbit.
\end{thm}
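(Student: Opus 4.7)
The plan is to proceed by induction on $|G|$, with $G=1$ trivial. First I would reduce to the case that $V$ is an irreducible $G$-module: if $V=V_1\oplus V_2$ as $G$-modules with kernels $K_i=C_G(V_i)$, then $K_1\cap K_2=1$, and both $G/K_i$ satisfy the hypothesis with the faithful $G/K_i$-module $V_i$ and abelian $p$-subgroup $AK_i/K_i$. Induction on each factor together with a gluing argument (cleaner because $A$ is abelian, so $A\cap K_1$ and $A\cap K_2$ commute and their regular orbits can be combined componentwise) should produce a regular $A$-orbit on $V$ itself.

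Assuming now that $V$ is $G$-irreducible, let $F=F(G)$. Since $G$ is solvable and $\bO_p(G)=1$, the Fitting subgroup $F$ is a $p'$-group, so $V$ is completely reducible as an $F$-module. If $V$ is not $F$-homogeneous, a Clifford-theoretic decomposition $V=\Ind_H^G(W)$, with $H<G$ the stabilizer of a homogeneous component, reduces the problem to a strictly smaller group; $A$ acts on the set of homogeneous components, and the desired regular orbit on $V$ can be lifted from one produced by induction on $H$ (possibly after replacing $A$ by a conjugate lying in $H$, using abelianness of $A$).

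In the remaining \emph{primitive} case, $V$ is $F$-homogeneous, and the classification of solvable primitive linear groups (cf.\ \cite{MW}) forces $F/Z(F)$ to be a product of extraspecial $q$-groups (with $q\neq p$) carrying a non-degenerate symplectic form preserved by $A$. The idea is then to bound $|C_V(a)|$ for each $1\neq a\in A$ via this symplectic action; a standard quadratic-form estimate yields $|C_V(a)|\leq |V|^{1/2}$ (or a better power depending on the rank of $a-1$ on the symplectic quotient), from which one deduces
$$\sum_{1\neq a\in A}|C_V(a)|<|V|.$$
Consequently some $v\in V$ avoids every such $C_V(a)$, which means $\Stab_A(v)=1$, i.e., $v$ lies in a regular $A$-orbit.

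I expect the primitive case to be the main obstacle: one needs uniform estimates on $|C_V(a)|$ matched to the extraspecial structure of $F$, and these depend on $A$ acting faithfully on the symplectic quotient $F/Z(F)$ (a subtlety, since a priori $A$ could centralise part of this quotient and one must then pass to a further subgroup or invoke the hypothesis $\bO_p(G)=1$ to rule this out). The inductive reductions also require checking that $\bO_p(\,\cdot\,)=1$ persists in the relevant quotients and Clifford stabilisers, but once inside the primitive case the argument becomes an explicit orbit-counting estimate carried through by the Manz--Wolf framework.
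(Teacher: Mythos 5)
You should first be aware that the paper does not prove this statement at all: it is Espuelas' regular orbit theorem, quoted as ``a consequence of the main result of \cite{esp}'', whose proof occupies an entire article. So your proposal is an attempt to reprove that theorem, and while it correctly identifies the standard skeleton (reduce to irreducible, then to primitive, then analyse the extraspecial Fitting configuration and count fixed points), none of the load-bearing steps is actually established, and two of them fail as stated. (a) In the direct-sum reduction, the hypothesis $\bO_p(G/K_i)=1$ genuinely need not persist, and this is not a routine check: without it the inductive statement for the factor $(G/K_i,V_i)$ is simply false, since a $p$-group acting on a characteristic-$p$ module need not have a regular orbit (already an elementary abelian group of unitriangular matrices fails). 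So the induction as organised does not apply to the summands; the reduction has to be set up differently (for instance via the coprime action of the $p'$-group $\bF(G)$), which is real work. (b) In the imprimitive case, an abelian $p$-subgroup $A$ cannot in general be conjugated into the stabiliser $H$ of a homogeneous component (any $A$ acting nontrivially on the set of components is a counterexample), so ``induct on $H$ and lift'' is not available as described; the elements of $A$ that permute the components are precisely the obstruction, and controlling them is a nontrivial combinatorial problem of regular-orbits-on-power-sets type (compare Lemma~\ref{dol} of this paper and Gluck's counterexample for $2$-groups in \cite{Gl}), where both the abelian hypothesis and the prime $2$ matter.

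The most telling problem is that your primitive-case counting, and essentially your imprimitive step, make no serious use of the hypothesis that $A$ is abelian, yet the theorem is false for nonabelian $p$-subgroups. Concretely, let $p=2$, $G=\fS_3\wr C_2$ acting faithfully on $V=\FF_2^2\oplus\FF_2^2$ (two copies of the natural module, interchanged by the top group); then $\bO_2(G)=1$, but the Sylow $2$-subgroup $A\cong D_8$ has no regular orbit on $V$: writing $t$ for a transposition and $\sigma$ for the swap, every vector $(u,w)$ is fixed either by $(t,1)$, $(1,t)$, or by one of the swapping elements $(t^i,t^j)\sigma$. This configuration is exactly your imprimitive step (with $H=\fS_3\times\fS_3$, and $A\cap H$ does have a regular orbit), so it shows that lifting a regular orbit from $H$ cannot work without an argument that genuinely exploits commutativity of $A$ to exclude the component-permuting stabiliser elements. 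Similarly, the proposed bound $|\bC_V(a)|\leq|V|^{1/2}$ is not a ``standard quadratic-form estimate'': since $A$ is a $p$-group and $V$ has characteristic $p$, every $1\neq a\in A$ acts unipotently, and for $p=2$ every involution automatically satisfies $|\bC_V(a)|\geq|V|^{1/2}$, so your inequality can hold at best with equality and the criterion $\sum_{1\neq a\in A}|\bC_V(a)|<|V|$ in addition forces $|A|$ to be small relative to $|V|$. Establishing such bounds from the extraspecial/symplectic structure, with the abelian hypothesis entering in an essential way and with the exceptional small cases treated separately, is precisely the content of Espuelas' paper (and of the related machinery in \cite{MW}); it cannot be waved through as an orbit-counting afterthought.
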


\begin{proof}
This is a consequence of the main result of \cite{esp}.
\end{proof}

\begin{thm}   \label{isam}
 Let $p$ be a prime and let $G$ be a solvable group of order divisible by $p$
 with $\bO_p(G)=1$. Let $V$ be a faithful completely reducible $G$-module in
 characteristic $p$. Suppose that $P\in\Syl_p(G)$ is abelian. Then there exists
 $v\in V$ in a $P$-orbit of size $p$.
\end{thm}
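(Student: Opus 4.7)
The plan is to rephrase the conclusion so that it becomes a statement about fixed points. Since $P$ is abelian, a vector $v\in V$ has $P$-orbit of size $p$ precisely when $\Stab_P(v)$ is a maximal subgroup of $P$. Thus it suffices to produce a maximal subgroup $M\leq P$ with $C_V(M)\supsetneq C_V(P)$: any $v\in C_V(M)\setminus C_V(P)$ has stabilizer a subgroup of $P$ properly containing $M$ or equal to $M$, and not equal to $P$, so it is exactly $M$, and $|P\cdot v|=p$.

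My first step would be a reduction to the case $V$ irreducible. Complete reducibility gives $V=\bigoplus_i V_i$ with each $V_i$ irreducible, and since $V$ is faithful and $P\neq 1$ some summand $V_i$ is not pointwise $P$-fixed; any $P$-orbit of size $p$ inside $V_i$ also gives one in $V$. The care here is to choose $V_i$ so that the hypothesis $\bO_p=1$ (needed for whatever we invoke next) is preserved for the relevant quotient of $G$; this might require selecting $V_i$ adapted to the Fitting structure rather than arbitrarily. Once $V$ is irreducible, solvability of $G$ together with $\bO_p(G)=1$ forces $F:=F(G)=\bO_{p'}(G)$ to be a nontrivial $p'$-group with $C_G(F)\leq F$, so $P$ acts faithfully on $F$ by conjugation.

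The core argument would apply Clifford's theorem to $F\trianglelefteq G$, writing $V|_F=e(W_1\oplus\cdots\oplus W_t)$ with $\{W_j\}$ a $G$-orbit of homogeneous components. The analysis splits according to the $P$-action on $\{W_1,\dots,W_t\}$. If this action is trivial, $P$ lies in the stabilizer of a single homogeneous component and one reduces to the homogeneous case; here one applies Espuelas's theorem (Theorem~\ref{espm}) to a suitable abelian $p$-subgroup of $P$ to obtain a vector with controlled stabilizer, then uses the action of $P$ on the endomorphism algebra $\End_F(W_1)$ to sharpen this to an orbit of size exactly $p$. If the $P$-action on $\{W_j\}$ is nontrivial, the induced permutation action has orbits of $p$-power size, and one chooses a vector supported carefully across these components so that its $P$-stabilizer is forced to have index $p$.

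I expect the main obstacle to be the final calibration: Theorem~\ref{espm} delivers \emph{regular} orbits (of size equal to the full order of the abelian $p$-subgroup), while we require an orbit of size \emph{exactly} $p$. Bridging this gap should rely on a careful choice of maximal subgroup $M<P$ matched to the Clifford decomposition, so that $M$ already fixes the chosen homogeneous component (or sum of components) while a generator of $P/M$ necessarily moves the vector we construct; the hypothesis $\bO_p(G)=1$ is what should ultimately prevent every index-$p$ subgroup of $P$ from having the same fixed-point set as $P$ itself.
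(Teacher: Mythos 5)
Note first that the paper itself does not prove this statement: it is quoted verbatim as Isaacs' Lemma~5.1 of \cite{EM14}, so there is no internal argument to compare yours with. Judged on its own terms, your proposal has a genuine gap exactly where the content of the theorem lies. The reformulation (find a maximal $M<P$ with $\bC_V(M)\supsetneq\bC_V(P)$) is correct; the reduction to $V$ irreducible is fine and in fact easier than you fear, since for a faithful irreducible module in characteristic $p$ the condition $\bO_p=1$ is automatic (the fixed points of a normal $p$-subgroup form a nonzero submodule); and the case where $P$ moves some homogeneous component can indeed be completed (take $Q=\Stab_P(W_1)$, a maximal $M<P$ with $Q\le M$, a nonzero $Q$-fixed vector $w\in W_1$, and the sum of the $M$-translates of $w$: its stabiliser is exactly $M$). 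But the case in which $P$ stabilises every homogeneous component -- in particular the homogeneous case $V|_F=eW_1$ -- is not proved. There you only say that Theorem~\ref{espm} applied to ``a suitable abelian $p$-subgroup'' together with the action of $P$ on the endomorphism algebra of $W_1$ ``should'' be sharpened to an orbit of size exactly $p$, and your closing paragraph concedes that this calibration is the main obstacle.

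This is not a technical loose end but the heart of the matter. Espuelas' theorem points in the wrong direction: it produces a vector with trivial stabiliser in an abelian $p$-subgroup $A$, i.e.\ an orbit of size $|A|$; if you apply it to $P$ itself or to a maximal subgroup of $P$, the resulting $P$-orbit has size at least $|P|/p$, which equals $p$ only when $|P|\le p^2$. No mechanism is offered that forces some index-$p$ subgroup of $P$ to have strictly larger fixed space than $P$, and that assertion is precisely the theorem being proved; Isaacs' actual argument in \cite{EM14} has to do genuine inductive work through this Clifford-theoretic homogeneous situation rather than quoting a regular-orbit theorem. (A small additional slip: $\bF(G)=\bO_{p'}(G)$ need not hold when $\bO_p(G)=1$; you only need $\bF(G)\le\bO_{p'}(G)$ and $\bC_G(\bF(G))\le\bF(G)$.) As it stands, then, your text is a plausible plan with a correct outer shell, but not a proof.
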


\begin{proof}
This is Isaacs' Lemma 5.1 of \cite{EM14}. We take this opportunity to point out
that in that statement it should say `reducible' instead of `irreducible'.
\end{proof}

Finally we recall a particular case of \cite[Problem 6.18]{isa} that will be
used several times in this paper. The result is well-known, but for the
reader's convenience we sketch a proof.

\begin{lem}   \label{sem}
 Let $G=HN$, where $H\le G$, $N\unlhd G$ and $N\cap H=1$. If $\la\in\Irr(N)$
 is linear and $G$-invariant then $\la$ is extendible to $G$. 
\end{lem}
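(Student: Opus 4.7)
The plan is to construct the extension directly, exploiting the fact that $G$ is a (internal) semidirect product $N\rtimes H$. Because $G=HN$ and $N\cap H=1$, every element of $G$ has a unique expression of the form $nh$ with $n\in N$ and $h\in H$. I would define a function $\tilde\lambda:G\to\CC^{\times}$ by setting $\tilde\lambda(nh)=\lambda(n)$ for such a decomposition, and then verify in sequence that (i) $\tilde\lambda$ is well-defined, (ii) it is a group homomorphism, and (iii) its restriction to $N$ agrees with $\lambda$.

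The well-definedness is immediate from the unique factorization. For the homomorphism property, I would compute
\[
(n_1h_1)(n_2h_2)=n_1(h_1n_2h_1^{-1})h_1h_2=(n_1\cdot\tw{h_1}n_2)(h_1h_2),
\]
which again sits in the canonical form since $\tw{h_1}n_2\in N$. Hence
\[
\tilde\lambda\bigl((n_1h_1)(n_2h_2)\bigr)=\lambda(n_1\cdot\tw{h_1}n_2)=\lambda(n_1)\lambda(\tw{h_1}n_2),
\]
where the last equality uses that $\lambda$ is linear, hence multiplicative. Now the $G$-invariance of $\lambda$ gives $\lambda(\tw{h_1}n_2)=\lambda(n_2)$, so the expression collapses to $\lambda(n_1)\lambda(n_2)=\tilde\lambda(n_1h_1)\tilde\lambda(n_2h_2)$, as required. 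Finally, taking $h=1$ in the definition yields $\tilde\lambda(n)=\lambda(n)$ for all $n\in N$, so $\tilde\lambda$ is a linear character of $G$ extending $\lambda$.

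There is no real obstacle: the only place where all the hypotheses are used simultaneously is the homomorphism check, where linearity of $\lambda$ is needed to split the product and $G$-invariance is needed to absorb the conjugation by $h_1$. Both the splitting $G=N\rtimes H$ and the hypothesis that $\lambda$ is linear are essential; if $\lambda$ had higher degree one would need to argue via projective representations (as in the general case of \cite[Problem 6.18]{isa}), but for linear $\lambda$ the direct construction above suffices.
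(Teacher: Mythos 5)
Your proof is correct and follows essentially the same route as the paper, which defines $\tilde\la(hn)=\la(n)$ (the same map, with the factors written in the other order) and states that it is easily checked to be a homomorphism extending $\la$. You have simply carried out that verification explicitly, using linearity of $\la$ to split the product and $G$-invariance to absorb the conjugation, which is exactly where the hypotheses enter.
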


\begin{proof} 
It can easily be checked that $\tilde{\la}:G\longrightarrow\CC^\times$ defined
by $\tilde{\la}(hn)=\la(n)$ for $h\in H, n\in N$, is a group homomorphism,
extending the character $\la$.
\end{proof}

Now, we proceed to prove Theorem 1 which we restate:

\begin{thm}   \label{thm:main}
 Let $G$ be a finite group and $p$ a prime number. Suppose that the set of
 character degrees of a Sylow $p$-subgroup of $G$ is $\{1,p^a\}$, for some
 $a\ge1$. Then there exists $\chi\in\Irr(B_0(G))$ with
 $1\neq \chi(1)_p\leq p^a$.
\end{thm}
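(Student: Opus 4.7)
The plan is to proceed by induction on $|G|$, taking $G$ as a minimal counterexample. Standard reductions via inflation allow us to assume $\bO_{p'}(G)=1$, and via Brauer's Third Main Theorem together with $P\le\bO^{p'}(G)$ (and $|G:\bO^{p'}(G)|$ coprime to~$p$) we may further reduce to $G=\bO^{p'}(G)$. If $G$ is solvable, Theorem~\ref{sol} closes the case, so henceforth $G$ is non-solvable.

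Next, I would analyse the generalised Fitting subgroup $\bF^*(G)=\bO_p(G)\bE(G)$. If $\bE(G)$ contains a component $L$ of order divisible by $p$, set $R:=P\cap L$; since $\cd(P)=\{1,p^a\}$, the non-trivial character degrees of $R$ are at most $p^a$. The almost simple group $A:=\bN_G(L)/\bC_G(L)$ has socle $S\cong L/\bZ(L)$ and satisfies $A=\bO^{p'}(A)$, so Corollary~\ref{cor:as} provides $\psi\in\Irr(B_0(A))$ with $1\neq\psi(1)_p\le p^a$. I would then lift $\psi$ through the component structure (the components are transitively permuted by $G$, producing a wreath-product style decomposition) and finally to $G$ itself via Lemma~\ref{quo}, Lemma~\ref{lem:B0}, and the Fong--Reynolds correspondence, tracking the $p$-part at every transport step. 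In the remaining case, no component has order divisible by~$p$, so Hall--Higman yields $\bC_G(V)\le V$ for $V:=\bO_p(G)$. After reducing to the case $V$ is an irreducible $G$-module, the constraint $\cd(P)=\{1,p^a\}$ (combined with Clifford theory and Lemma~\ref{sem}) forces $G/V$ to act $p$-exceptionally on $V$. I would then invoke Theorem~\ref{t1}: the primitive transitive subcase via Hering's classification (Appendix~1 of \cite{liebeck}), the $\GaL_1(p^n)$ subcase by direct computation, and the five exceptional configurations of part~(3) by finite inspection. In the imprimitive subcase, Theorem~\ref{t3} reduces us to a primitive $p$-concealed permutation action on the set of imprimitivity blocks, which is then handled by Theorem~\ref{t2} together with Lemma~\ref{pcon}, using Lemma~\ref{dol} and Theorem~\ref{isam} when $p=2$.

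I expect the main obstacle to be the $p$-solvable but non-solvable stratum, where non-abelian composition factors of order coprime to~$p$ prevent a direct appeal to the solvable case (Theorem~\ref{sol}) and force us to use the CFSG-based Guralnick--Robinson result \cite[Thm~1.2]{mn05}. A second persistent difficulty is the bookkeeping required to ensure that each transport step (inflation, Clifford correspondence, Brauer's Third Main Theorem, Fong--Reynolds) simultaneously preserves membership in the principal block and the precise $p$-part of the constructed character, since the target bound $\chi(1)_p\le p^a$ is tight; the small exceptional configurations of Theorem~\ref{t1}(3) and Theorem~\ref{t2}(2)--(3) will also require direct verification against the hypothesis $\cd(P)=\{1,p^a\}$.
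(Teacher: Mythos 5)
There is a genuine gap, and it sits exactly where the hardest part of the paper's argument lives: the case where $\bE(G)\neq 1$. Your plan is to pick a component $L$ of order divisible by $p$, pass to the almost simple group $A=\bN_G(L)/\bC_G(L)$, invoke Corollary~\ref{cor:as} (or Theorem~\ref{thm:almost}) to get $\psi\in\Irr(B_0(A))$ with $1\neq\psi(1)_p\le p^a$, and then lift $\psi$ back to $G$. But both of those results require the Sylow $p$-subgroups of $A$ to be non-abelian (and Corollary~\ref{cor:as} additionally needs $\cd$ of that Sylow to consist of exactly two degrees), and neither hypothesis follows from $\cd(P)=\{1,p^a\}$: the non-abelianness of $P$ may come entirely from $\bO_p(G)$ or from the permutation action of $P$ on the components, while every almost simple section $\bN_G(S_i)/\bC_G(S_i)$ has abelian Sylow $p$-subgroups (think of $G$ built from $S\wr C_p$ with $S$ having abelian Sylow $p$-subgroups). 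In that configuration $B_0(A)$ contains no character of positive height at all (this is Brauer's height zero conjecture), so there is nothing to lift, and the "wreath-product style decomposition plus Fong--Reynolds" step cannot manufacture the required character. This is precisely why the paper's Step~2 does not argue through an almost simple section when $t>1$: it constructs the character directly from the permutation data, using regular orbits of $P$ on the power set resp.\ on $\fP_3(\Om)$ (Lemma~\ref{dol}) to bound $|G:H|_p\le p^a$, the invariant character of Lemma~\ref{simples}, and then a dichotomy according to whether the action of $G/H$ on the components is $p$-concealed, settled by Theorems~\ref{t2},~\ref{t3}, Lemmas~\ref{pconcealed},~\ref{pcon} and the block-covering Lemma~\ref{quo}. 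Your outline contains none of these ideas, and without them the component case is open.

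Two further points are glossed over. First, when $\bO_p(G)>1$ and $\bE(G)>1$ simultaneously, the paper does not lift characters from components either; it shows (Step~4) that the unique minimal normal subgroup lies in $\bZ(S')$ for a quasi-simple normal $S'$ and then uses Walter's classification of simple groups with abelian Sylow $2$-subgroups together with Schur multiplier data to force $|N|=p$ and $N$ central, reducing to the central case (Step~3) -- a case your proposal never treats. Second, in the $\bE(G)=1$ branch the phrase ``after reducing to the case $V$ is an irreducible $G$-module'' hides real work: one cannot simply replace $\bO_p(G)$ by a chief factor, since $\cd(P)$ and the centralizer condition are not inherited. The paper instead works with the unique minimal normal subgroup $N$, proves $p$-exceptionality of the action on $\Irr(N)$ (Step~6, using that $G$ has a single $p$-block), shows $\bO_{p'}(G/N)>1$ may be assumed only after a separate application of Theorems~\ref{t1} and~\ref{t3} (Step~8), and then produces a complement $L$ of $N$ acting faithfully and irreducibly via a Frattini argument (Step~9) before Hering's classification can be applied. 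Your endgame (GLPST, Hering, $p$-concealed analysis, the $p$-solvable stratum via \cite{mn05}) does match the paper's Steps~6--11 in spirit, but as written the proof fails at the semisimple layer.
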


\begin{proof}
We proceed by induction on $|G|$.
\medskip

\textit{Step $0$. We may assume $\bO^{p'}(G)=G$ and $\bO_{p'}(G)=1$. Moreover,
 if $1<N\lhd G$, $G/N$ has abelian Sylow $p$-subgroups. Also, there is just one
 minimal normal subgroup of $G$.}
\medskip

Let $P\in \Syl_p(G)$. Let $M=\bO^{p'}(G)$, so that $P\subseteq M$. If $M<G$,
by induction there exists $\psi\in\Irr(B_0(M))$ with $1\neq \psi(1)_p\leq p^a$.
Let $\chi\in\Irr(B_0(G))$ lying over $\psi$, which exists by
\cite[Thm~9.4]{N98}. Then $\chi(1)_p=\psi(1)_p$, and we are done.
\medskip

Now let $N=\bO_{p'}(G)$ and notice that $\Irr(B_0(G/N))=\Irr(B_0(G))$ by
\cite[Thm~9.9(c)]{N98}. If $N>1$ then, since $P\cong PN/N$, by induction there
exists $\chi\in\Irr(B_0(G))$ with $1\neq \chi(1)_p\leq p^a$, as
wanted.
\medskip

Next we show that $G/N$ has abelian Sylow $p$-subgroups for every non-trivial
normal subgroup $N$ of $G$. We have that $\cd(PN/N)\subseteq\cd(P)=\{1,p^a\}$.
Then, if $\cd(PN/N)=\cd(P)$, we are done by induction, since
$\Irr(B_0(G/N))\subseteq\Irr(B_0(G))$. Hence $\cd(PN/N)=\{1\}$ and $PN/N$ is
abelian.
\medskip

The last part of the statement follows from the fact that if $N$ and $M$ are
minimal normal subgroups of $G$, then $G/N$ and $G/M$ have abelian Sylow
$p$-subgroups and hence $G$ is isomorphic to a subgroup of $G/N\times G/M$
(consider the projection $G\rightarrow G/N\times G/M$), which has abelian Sylow
$p$-subgroups, a contradiction since $\cd(P)=\{1,p^a\}$ with $a\geq 1$, by
hypothesis. 
\bigskip

\textit{Step $1$. We may assume that $G$ is not solvable.}
\medskip

Suppose that $G$ is solvable and let $F=\bF(G)$. By Step 0, $F=\bO_p(G)$.
Suppose that $F$ is not elementary abelian, then the Frattini subgroup
$\Phi(G)$ is not trivial and by Step $0$ we have $G/\Phi(G)$ has abelian Sylow
$p$-subgroups. By Gasch\"utz's theorem (see \cite[III.4.2 and III.4.5]{hup})
$G/F$ acts faithfully and completely reducibly on $F/\Phi(G)$. It follows that
$F$ is a Sylow $p$-subgroup of $G$ and by Step 0 $G=F$ is a $p$-group and the
claim follows trivially. Hence we may assume that $F$ is elementary abelian.
Using again Gasch\"utz's theorem, we have that $G=FH$ is a semidirect product
with $H$ acting faithfully and completely reducibly on $F$. Arguing as before
we see that $H/\bO_{p'}(H)$ is a $p$-group. By Schur--Zassenhaus we have that
$H=RQ$, where $R=\bO_{p'}(H)$ and $Q\in\Syl_p(H)$ is abelian.

By Theorem \ref{espm}, there exists a regular $Q$-orbit on $\Irr(F)$. Now, by
Theorem \ref{isam}, there exists a $Q$-orbit of size~$p$ on $\Irr(F)$. Let
$P=FQ\in\Syl_p(G)$. Then $P$ has two character degrees and using
Lemma~\ref{sem}, we conclude that $|Q|=p$. Since $P$ is not abelian,
It\^o's theorem \cite[Cor.~12.34]{isa} implies that there exists
$\chi\in\Irr(G)$ of degree divisible by $p$. Since $F$ is a normal abelian
subgroup and $|G:F|_p=p$, it follows that $\chi(1)_p=p$, and we are done.
\bigskip

\textit{Step $2$. Let $N$ be the unique minimal normal subgroup of $G$. If
 $N=S_1\times\cdots\times S_t$ for non-abelian simple groups $S_i$, then we are
 done.}
\medskip

Since $N$ is the unique minimal normal subgroup of $G$ and it is non-abelian, we have that $\bC_G(N)=1$, so $G\leq \Aut(N)$. If $t=1$, we are done by Corollary~\ref{cor:as}, hence we may assume that $t>1$. Write $H=\bigcap_{i=1}^t\bN_G(S_i)$. We claim that $|G:H|_p\leq p^a$.
\medskip

Write $S=S_1$. Let $Q\in\Syl_p(S)$ so that $R=Q\times\cdots\times Q\in\Syl_p(N)$. Note that $Q>1$ by Step 0. Let $P\in\Syl_p(G)$ containing $R$, so $P\cap N=R$ and let $T=P\cap H\in\Syl_p(H)$.
Note that $\bar{G}=G/H$ transitively permutes the set $\Om=\{S_1,S_2,\ldots, S_t\}$, so $P/T$ is a permutation group on $\Om$.
Assume first that $p$ is odd.
By the first part of Lemma \ref{dol}, there exists $\Delta$ non-empty proper subset of $\Om$ in a regular $P/T$-orbit.
Let $\gamma,\delta\in\Irr(Q)$ be two different characters and let $\mu\in\Irr(R)$ be the character that is a product of copies of $\gamma$ in the positions corresponding to $\Delta$ and copies of $\delta$ elsewhere. It follows that $P_{\mu}\leq \Stab_P(\Delta)= T$. Thus if $\vhi\in\Irr(P)$ lies over $\mu$, $$p^a\geq \vhi(1)\geq|P:T|=|G:H|_p,$$ as wanted.
\medskip

Now, assume that $p=2$. By the second part of Lemma~\ref{dol}, $P/T$ has a
regular orbit on $\fP_3(\Om)$. In other words, there exists a partition
$\Om_1,\Om_2,\Om_3,\Om_4$ of $\Om$ (with at least two non-empty parts) such
that no non-identity element in $P/T$ fixes $\Om_i$ for every $i$. Since a
2-group of order greater than~2 has at least four conjugacy classes, we have
that a Sylow $2$-subgroup of $S$ has at least $4$ conjugacy classes. Hence, we
may choose four different characters
$\delta_1,\delta_2,\delta_3,\delta_4\in\Irr(Q)$. Let $\delta$ be the character
of $R$ that is a product of copies of $\delta_i$ in the positions corresponding
to $\Om_i$ for $i=1,2,3,4$ (if $\Omega_i$ is empty then the character
$\delta_i$ does not appear).  As before, we see that
$P_\delta\subseteq T$. Thus if $\vhi\in\Irr(P)$ lies over $\delta$,
$$2^a\geq \vhi(1)\geq|P:T|=|G:H|_2,$$
as wanted. Thus the claim is proven.
\medskip

Next we claim that $\bC_G(R)\subseteq H$. Let $g\in G\setminus H$. Since $G/H$
is a permutation group on $\{S_1,\dots,S_t\}$, without loss of generality we
may assume that $g$ moves $S_1$. Therefore, for any $x\in Q\setminus\{1\}$,
$g$ does not fix $(x,1,\dots,1)\in R$. It follows that $\bC_G(R)\subseteq H$,
as wanted.
\medskip

By Lemma \ref{simples}, there exists $1_S\neq\alpha\in\Irr_{p'}(B_0(S))$,
$X$-invariant for a Sylow $p$-subgroup $X$ of $\Aut(S)$. Recall that $\bar{G}$
transitively permutes $\{S_1,\ldots, S_t\}$. 
Suppose that the action of $\bar{G}$ on $\Om$ is not $p$-concealed. In this
case, there exists $\Gamma\subseteq \Om$ such that $p$ divides
$|\bar{G}:\Stab_{\bar{G}}(\Gamma)|$. Consider now the character
$\theta\in\Irr_{p'}(B_0(N))$ that is a product of copies of $\al$ in the
positions corresponding to the copies of $S$ in $\Gamma$, and copies
of $1_S$ in the positions corresponding to the copies of $S$ in $\Om-\Gamma$.
Then $\theta$ is invariant in a Sylow $p$-subgroup of~$H$, so $|H:H_\theta|$ is
prime to $p$. Then
$$|G:G_\theta|_p\leq |G:H|_p\leq p^a.$$ 
On the other hand, $G_\theta H/H\subseteq \Stab_{\bar{G}}(\Gamma)$, so $p$
divides $|G:G_\theta|$. It suffices to show that there exists
$\chi\in\Irr(B_0(G))$ with $\chi(1)_p=|G:G_\theta|_p$. Now, $\theta$ is of
$p'$-degree and $N$ is perfect, so if $YN/N\in\Syl_p(G_\theta/N)$, $\theta$
extends to $YN$ by \cite[Cor.~6.28]{isa}. Then, by \cite[Lemma~4.3]{Mu}, there
is $\psi\in\Irr_{p'}(B_0(G_\theta))$ lying over $\theta$. Now
$\chi=\psi^G\in\Irr(B_0(G))$ (by the Clifford correspondence and 
\cite[Cor.~6.2 and Thm~6.7]{N98}) and $\chi(1)_p=|G:G_\theta|_p$, as wanted.
\medskip

Hence we may assume that the action of $\bar{G}$ on $\Om$ is $p$-concealed.
Let $\Delta_1,\ldots, \Delta_r$ be a partition of $\Om$ in blocks of maximal
order (in the sense of Lemma~\ref{pconcealed}). By Lemma~\ref{pconcealed} there
exists a proper normal subgroup $L$ of $G$ such that $H\leq L\lhd G$ and the
action of $G/L$ on $\{\Delta_1,\ldots,\Delta_r\}$ is primitive and
$p$-concealed. We claim that
$$\Irr(G/L)\subseteq\Irr(B_0(G)).$$
Recall that $R=Q\times\cdots\times Q$ is a Sylow $p$-subgroup of $N$, where $Q$
is a Sylow $p$-subgroup of $S$.  Let $U\in\Syl_p(L)$ containing $R$, then
$$\bC_G(U)\subseteq\bC_G(R)\subseteq H\subseteq L.$$
The claim follows from Lemma \ref{quo}.
\medskip

Now, by Lemma \ref{pcon}, there exists $\chi\in\Irr(G/L)$ with $\chi(1)_p=p$.
Since this character belongs to the principal $p$-block of $G$, the result
follows.
\bigskip

\textit{Step 3. Let $N$ be the unique minimal normal subgroup of $G$. If $N\subseteq\bZ(G)$, then we are done.}
\medskip

By Step 0 and Step 2, $N$ is an elementary abelian $p$-group. 
Let $H=P\bC_G(P)$, where $P\in\Syl_p(G)$. If $H=G$, $P$ is normal in $G$, a
contradiction to Step~0. Hence we may assume $H<G$, and by induction there is
$\psi\in\Irr(B_0(H))$ with $1\neq \psi(1)_p\leq p^a$. Now, by
\cite[Thm~4.14]{N98} we have that $B_0(H)^G$ is defined and by the Third Main
Theorem~\cite[Thm~6.2]{N98}, $B_0(H)^G=B_0(G)$. Now,
$$\psi^G=\sum_{\chi\in\Irr(B_0(G))}a_\chi\chi+\sum_{\chi\in\Irr(B_1(G))}a_\chi\chi+\cdots + \sum_{\chi\in\Irr(B_k(G))}a_\chi\chi$$
where $B_i(G)$ are $p$-blocks of $G$. By \cite[Cor.~6.4]{N98} we have
$$\Bigg(\sum_{\chi\in\Irr(B_0(G))}a_\chi\chi(1)\Bigg)_p=\psi^G(1)_p\leq p^a.$$
Therefore there exists an irreducible constituent $\chi\in\Irr(B_0(G))$ of
$\psi^G$ with $\chi(1)_p\leq p^a$. We need to show that $\chi(1)_p\neq 1$. Let
$\la\in\Irr(N)$ be an irreducible constituent of $\psi_N$, so $\la$ lies under
$\chi$. Notice that $\la$ is $G$-invariant, since $N\subseteq\bZ(G)$. Now, if
$1=\chi(1)_p$, then $\chi_P$ has a linear constituent, which is an extension
of~$\la$. Since $P/N$ is abelian by Step 0, all characters in $\Irr(P|\la)$ are
extensions of $\la$ by Gallagher's theorem. Let $\tau\in\Irr(P|\la)$
under~$\psi$, then $\psi(1)_p=\tau(1)=1$, a contradiction. Hence
$1\neq \chi(1)_p$ and the step is proved.
\bigskip

\textit{Step $4$. We may assume that $\bF(G)=\bF^*(G)$.}
\medskip

Let $N$ be the unique minimal normal subgroup of $G$. By Step 0 and Step 2,
$N$ is an elementary abelian $p$-group.
\medskip

By Step 0, $F=\bF(G)=\bO_p(G)>1$. Suppose that $E=\bE(G)>1$ and let $Z=\bZ(E)$.
Since $N$ is the unique minimal normal subgroup of $G$, $N\subseteq Z$ (notice
that $Z>1$ since otherwise $\bF^*(G)=\bF(G)\times E$ in contradiction to
Step~0). We claim that $E/Z=S_1/Z\times\cdots\times S_n/Z$, where
$S_i\trianglelefteq G$ for every $i$. Let $W/Z$ be a (non-abelian) minimal
normal subgroup of $G/Z$ contained in $E/Z$. By the Schur--Zassenhaus theorem
and Step~0, we know that $|W/Z|$ is divisible by $p$. Now, by
Theorem~\ref{abel} applied to~$G/Z$, we have that $W/Z$ is simple and the claim
follows.
Write $S=S_1$, so that $S'$ is a quasi-simple normal subgroup of $G$. Using
again that $N$ is the unique minimal normal subgroup of $G$, we have that
$N\subseteq \bZ(S)\cap S'\subseteq \bZ(S')$.
Looking at the Schur multipliers of the simple groups \cite{GLS}, if $p\geq 5$,
we deduce that $\bZ(S')$ has cyclic Sylow $p$-subgroups. We claim that
$\bZ(S')$ has cyclic Sylow $p$-subgroups for $p=2,3$ as well.

Suppose first that $p=2$. Recall that $G/N$ has abelian Sylow $p$-subgroups,
whence the simple group $S'/N$ has abelian Sylow $p$-subgroups. The simple
groups with abelian Sylow $2$-subgroups were classified by J.~H.~Walter
\cite{wal}. They are $\PSL_2(2^n)$ for $n>1$, $\PSL_2(q)$ where $q\equiv3$ or
$5$ (mod 8) and $q>3$, the Ree groups $^2G_2(3^{2n+1})$ and the Janko
group~$J_1$. As before, we can check in \cite{gap} that these groups have
cyclic multiplier. Therefore $\bZ(S')$ has cyclic Sylow $2$-subgroups.
It remains to consider the case $p=3$. 
It can be checked in \cite{gap} that the unique simple group $S$ whose Schur
multiplier has a non-cyclic Sylow $3$-subgroup is $\PSU_4(3)$, but this group
does not have abelian Sylow $3$-subgroups.

In all cases, we conclude that $N$ is cyclic and hence $|N|=p$. Now, the order
of $G/\bC_G(N)$ divides $p-1$. Using Step 0 we conclude that $N$ is central
in~$G$ and we are done by Step 3. Therefore, we may assume that $E=1$, so that 
$\bF(G)=\bF^*(G)$, as desired.
\bigskip

\textit{Step $5$. There is just one $p$-block in $G$.}
\medskip

By Step 0 and Step 4, we have that $G$ is $p$-constrained, so there is just one
$p$-block (see \cite[Cor.~V.3.11]{feit}, for instance). 
\bigskip

Thus, from now on, we want to find $\chi\in\Irr(G)$ of degree divisible by $p$
such that $\chi(1)_p\leq p^a$. Let $N$ be the unique minimal normal subgroup of
$G$, which is an elementary abelian $p$-group.
Recall that by Step 1 we have that $G$ is not solvable.
\bigskip

\textit{Step $6$. Let $\la\in\Irr(N)$. Then there exists $Q\in\Syl_p(G)$ such
 that $\la$ extends to $Q$. In particular $|G:G_\la|$ is not divisible by $p$.}
\medskip

Let $P\in\Syl_p(G)$ and let $\psi\in\Irr(P|\la)$. If $\psi(1)=1$, then
$\psi_N=\la$, as wanted. Hence we may assume that $\psi(1)=p^a$. Since
$\psi^G(1)_p=\psi(1)=p^a$, there exists an irreducible constituent
$\chi\in\Irr(G)$ of $\psi^G$ with $\chi(1)_p\leq p^a$. If $1<\chi(1)_p$, we are
done. Hence we may assume that $\chi(1)_p=1$. In this case, there exists
$\xi\in\Irr(P)$ with $[\chi_P,\xi]\neq 0$ and $\xi(1)=1$. Let $\mu\in\Irr(N)$
under $\xi$, so $\xi_N=\mu$ and $\mu$ lies under $\chi$. Hence $\mu^g=\la$ for
some $g\in G$. Since $\mu$ extends to $P$, $\mu^g=\la$ extends to $P^g$, as
wanted.
\bigskip

\textit{Step $7$. Let $N$ be the unique minimal normal subgroup of $G$ (which
 is an elementary abelian $p$-group). Suppose there exists $V\lhd G$ with
 $V/N\cong\SL_2(p^n)$ or $V/N\cong\PSL_2(p^n)$ with $p^n>3$. Then $p^a\geq |V/N|_p$}. 
\medskip

Notice that since $V/N$ is perfect and $N$ is the unique minimal normal subgroup of $G$, we have that $V$ is perfect. We claim that if $1_N\neq \la\in\Irr(N)$ then $V_{\la}<V$. Indeed, let $\la\in\Irr(N)$ be $V$-invariant. By Step 6, $\la$ extends to some $R\in\Syl_p(V)$. Since $N$ is a $p$-group, $\la$ also extends to every $q$-Sylow subgroup of $V$ for $q\neq p$ by \cite[Cor.~6.28]{isa}. Then $\la$ extends to $V$ by \cite[Cor.~11.31]{isa}.
But this is not possible since $V$ is perfect. The claim follows.
\medskip

Let $1_N\neq \la\in\Irr(N)$, so $\lambda$ extends to some Sylow $p$-subgroup of $G$ by Step 6. Then $\lambda$ extends to some $R\in\Syl_p(V)$, in particular $R\subseteq V_\la$. Since $V_\la < V$, by Lemma \ref{lemasl}, we have
$V_\la\leq\bN_V(R)$. Let $Q\in\Syl_p(V)$ with $Q\neq R$. Again by
Lemma~\ref{lemasl} we have that $Q\cap R=N$. Now,
$$Q_\la\subseteq V_\la\leq \bN_V(R),$$
whence $Q_\la$ is a subgroup of a Sylow $p$-subgroup of $\bN_V(R)$. It follows
that $Q_\la\subseteq R$, so $Q_\la =N$. Therefore $\la^Q\in\Irr(Q)$, so
$$|Q:N|=\la^Q(1)\in\cd(Q).$$ Since $\lambda^Q$ lies under some irreducible character of some Sylow $p$-subgroup of $G$, we have that $|Q:N|\leq p^a$. 
\bigskip

\textit{Step $8$. We may assume that $\bO_{p'}(G/N)>1$.}
\medskip

Recall that by Step 0, $G/N$ has abelian Sylow $p$-subgroups. Suppose
$\bO_{p'}(G/N)=1$. Then, applying Theorem~\ref{abel} to $G/N$, we deduce that
there exist $X,Y\trianglelefteq G$ containing $N$ such that $X/N$ is an abelian
$p$-group, $Y/N$ is a direct product of non-abelian simple groups $V_i/N$ of
order divisible by $p$ and $G/N=X/N\times Y/N$. Furthermore $V_i/N$ is normal
in $G/N$. Notice that $X$ is a $p$-group, so $1<\bZ(X)\lhd G$. Therefore,
$N\subseteq\bZ(X)$. Then $X\subseteq \bC_G(N)$. Moreover, since $V_i/N$ is
simple, $\bC_{V_i}(N)=V_i$ or $\bC_{V_i}(N)=N$. If the first happens for
some~$i$, then $N\subseteq \bZ(V_i)$ and hence, as in Step~4, it is cyclic.
Then $|N|=p$, and we conclude arguing as in Step~4. Therefore $\bC_{G}(N)=X$.
\medskip

Using Step 6, we now apply Theorem~\ref{t1} and Theorem~\ref{t3} to the action
of $G/X\cong Y/N$ on $\Irr(N)$. Suppose first that this action is primitive, so
that Theorem \ref{t1} applies. 
Since $G/X$ is not solvable, we are not in case (2). Suppose now that we are in
case~(3). In subcase~(i), $p=2$. Since $G/X$ has abelian Sylow $p$-subgroups,
$c=5$ and $c=2^r-1$ or $2^r-2$ for some $r$. This is a contradiction.
Since $G/X\cong Y/N$ is a direct product of simple groups, subcase~(ii) does
not occur. In subcases~(iii), (iv) and~(v), we may assume that $G/X=\PSL_2(11), M_{11}$ or $M_{23}$ and $p=3, 3$ or~$2$, respectively. 
In subcase (iii), there exists $\chi\in\Irr(G/X)$ such that $\chi(1)=12$, so we are done by Step 5. Since $M_{23}$ does not have abelian Sylow $2$-subgroups, subcase (v) does not occur. Suppose that $G/X\cong Y/N=M_{11}$ and $p=3$. Furthermore, $|N|=3^5$, so $Y$ is a perfect group of order $|M_{11}|\cdot3^5=2^4\cdot3^7\cdot5\cdot11$. The group $M_{11}$ has an irreducible character of degree $45$, so its $3$-part is $9$. It suffices to see that the largest character degree of a Sylow $3$-subgroup of $Y$ is at least $9$. We can check in \cite{gap} that there are two perfect groups of order $|Y|$. In both of them, the largest character degree of a Sylow $3$-subgroup is $9$. The result follows in this case too.
\medskip

Finally, we may assume that we are in case (1). As mentioned before, these
groups were classified by Hering. We will use the description in
\cite[App.~1]{liebeck}.
We remark that, as was observed in the proof of \cite[Lemma~6.4]{NT13}, for instance, these groups have at most one non-abelian composition factor. Since $Y/N$ is a direct product of non-abelian simple groups, it suffices to study the simple groups with abelian Sylow $p$-subgroups that appear in Hering's theorem. 
First we consider case (A) of that theorem. Since $Y/N$ is a direct product of non-abelian simple groups of order divisible by $p$, the only possibility is $p=2$ and $Y/N=\SL_2(2^n)$, where $|N|=2^{2n}$ and $n\geq2$. By Step 7 we have that $1<|Y/N|_p\leq p^a$. Moreover if $\chi$ is the Steinberg character of $G/X\cong Y/N$ then $$\chi(1)=|Y/N|_p\in\cd(G/X)$$ and we are done.
\medskip

We do not need to consider the extra-special classes described in (B) because
they are not simple.  Finally, we consider the exceptional classes described in
(C) (see \cite[Tab.~11]{liebeck}). If $Y/N\cong \fA_6$ or $Y/N\cong \fA_7$ and
$p=2$, $Y/N$ does not have abelian Sylow $p$-subgroups. The remaining groups are not simple
\medskip

Now, we may assume that the action of $L=Y/N$ on $N$ is imprimitive. 
We apply Theorem \ref{t3} (recall that $L=\bO^{p'}(L)$ by Step 0). Let
$N=N_1\oplus\cdots\oplus N_n$ be an imprimitivity decomposition for $N$.
Therefore, $\bN_L(N_i)$ is transitive on $N_i\setminus\{0\}$ and
$M:=L/\bigcap\bN_L(N_i)$ induces a primitive $p$-concealed subgroup of $\fS_n$.
Note that $M$ is a factor group of $G$. By Lemma~\ref{pcon}, this group has an
irreducible character $\chi$ such that $\chi(1)_p=p$ and the result follows in
this case.
\bigskip

\textit{Step $9$. If $N$ is the unique minimal normal subgroup, we have that $N=\bF(G)$. In particular, $\bC_G(N)=N$.}
\medskip

Let $K/N=\bO_{p'}(G/N)$. By Step 8, we may assume that $K>N$. By the
Schur--Zassenhaus theorem, there exists a $p$-complement $H$ of $K$, so $K=HN$
and $H\cap N=1$. Then by the Frattini argument we have $G=N\bN_G(H)$.
Write $L=\bN_G(H)$. Now, since $N$ is abelian and normal in $G$, $\bN_N(H)$ is
normal in $G=\bN_G(H)=NL$, and so $\bN_N(H) =1$ or $\bN_N(H) =N$. If
$\bN_N(H)=N$, then $H\lhd G$, and we get a contradiction since $\bO_{p'}(G)=1$
by Step~0. Thus, $L\cap N=\bN_N(H) =1$ and $L$ is a complement of $N$ in~$G$.

Let $F=\bF(G)=\bO_p(G)$. We claim that $F=N$. Notice that $N\subseteq\bZ(F)$
since $\bZ(F)>1$. Let $F_1=F\cap L$. Then $F_1\lhd L$ and since $G=NL$, we have
that $F_1\lhd G$. Since $N$ is the unique minimal normal subgroup, this forces
$F_1=1$, so $F=N$ as claimed. Since $N=\bF(G)=\bF^*(G)$ by Step 4, we have
$\bC_G(N)\subseteq N$, as wanted.
\bigskip

\textit{Step $10$. If $G$ is $p$-solvable, then we are done.}
\medskip

We already know that $G$ is not solvable by Step~1, so $p>2$. Let
$P\in\Syl_p(G)$. By Step~9, $G/N$ acts faithfully on $N$. Hence, $G/N$ acts
faithfully on $\Irr(N)$ and by \cite[Thm~3.1]{mn05} we conclude that there
exists $\la\in\Irr(N)$ such that $P_{\la}=N$. Therefore, by Clifford's
correspondence, $\la^P\in\Irr(P)$ and $p^a=|P:N|$ since $P\neq N$. On the other
hand, since $N$ is an abelian normal subgroup of $G$, it follows from It\^o's
theorem (\cite[Thm~6.15]{isa}, for instance) that for any $\chi\in\Irr(G)$,
$\chi(1)$ divides $|G:N|$. In particular,
$$\chi(1)_p\leq|G:N|_p=|P:N|.$$
Hence, it suffices to see that there exists $\chi\in\Irr(G)$ of degree
divisible by~$p$. Since $G$ does not have a normal abelian Sylow $p$-subgroup,
this follows from \cite[Thm~12.33]{isa}.
\bigskip

\textit{Step $11$. Completion of the proof.}
\medskip

Now, we assume that $G$ is not $p$-solvable. 
Let $K/N=\bO_{p'}(G/N)$. By Step~8, we may assume that $K>N$. Again, applying
Theorem~\ref{abel} to $G/N$, we deduce that there exist $X,Y\trianglelefteq G$
containing $N$ such that $X/K$ is an abelian $p$-group, $Y/K$ is a direct
product of non-abelian simple groups $V_i/N\trianglelefteq G/N$ of order
divisible by $p$ and $G/K=X/K\times Y/K$.

As in Step 9, let $H$ be a $p$-complement of $K$ and let $L=\bN_G(H)$. Then
$G=NL$ and $N\cap L=1$. By Step~9, we have that $L$ acts faithfully and
irreducibly on $\Irr(N)$. By Step~6, this action is $p$-exceptional.
\medskip

Suppose first that the action of $L$ on $\Irr(N)$ is primitive. We apply
Theorem~\ref{t1}. In case (2), $G$ is solvable so this case does not occur.
Assume that we are in case (3). In subcases (i) and (v) we can argue as in
Step~8. In subcases (ii), (iii) and (iv) $G/N$ has a normal subgroup
$V/N=\SL_2(5), \PSL_2(11)$ or $M_{11}$ respectively and in all cases $p=3$.
Note that one of the simple direct factors of $Y/K$ is then $\fA_5,\PSL_2(11)$
or $M_{11}$, respectively. Since the first two groups have irreducible
characters whose degree has $3$-part $3$, the result follows in these cases.
Suppose now that $V/N=M_{11}$. In this case, note that $V$ is perfect and
$|N|=3^5$, so we can complete the proof in this case as in Step 8.
\medskip 

Now, we may assume that we are in case (1). So $L$ is transitive on
$\Irr(N)\setminus\{1_N\}$ and $L$ is one of the groups from Hering's theorem.
Again, we use the description in \cite[App.~1]{liebeck}. 
We start with the infinite classes described in (A). Since $G/N$ is not
solvable and has abelian Sylow $p$-subgroups, it follows that $G/N$ has a
normal subgroup $V/N\cong \SL_2(p^n)$, where $|N|=p^{2n}>3$. By Step~7, we have
that $|V/N|_p\leq p^a$. Now, $\PSL_2(p^n)$ is a composition factor of $G/K$.
Write $M/K$ for this composition factor. If $\chi$ is the Steinberg character
of $M/K$ then
$$\chi(1)=|M/K|_p\in\cd(M/K)\subseteq\cd(G/K)\subseteq\cd(G)$$
and we are done since $|M/K|_p= |V/N|_p$.
\medskip
 
Next, we consider the extra-special classes described in (B). The first four
groups in \cite[Tab.~10]{liebeck} are solvable (in fact they all have a normal
subgroup $R/N\cong {\rm Q}_8$ such that $G/R$ is isomorphic to a subgroup of
$\SL_2(3)$, see \cite[Rem.~XII.7.5]{hb}), so they do not occur. The last case
of this table (where $|N|=3^4)$ can be handled with \cite{gap}.
 
Finally, we consider the exceptional classes described in (C). In the cases
where $p^d\in\{11^2,19^2,29^2,59^2\}$ in \cite[Tab.~11]{liebeck}, we have that
$G/N$ (and hence, $G$) is $p$-solvable, so we are done. Since $G/N$ has abelian
Sylow $p$-subgroups, we are left with the first and the last cases of Table~11.
In the last case, $p=3$ and $G/N\cong\SL_2(13)$. Here, there is
$\chi\in\Irr(G/N)$ of degree~6, and we are done. The first case can be handled
with \cite{gap}.
\medskip

Now, assume that the action of $L$ on $N$ is imprimitive. Arguing as in the
last paragraph of Step 8, we find $\chi\in\Irr(L)$ such that $\chi(1)_p=p$.
This completes the proof.
\end{proof}


\end{document}